\date{Avril 2011}
\title{The fundamental lemma and the Hitchin fibration}
\author{Thomas C. HALES}
\address{University of Pittsburgh\\
Department of Mathematics\\
Pittsburgh, PA 15260 -- U.S.A.}
\email{hales@pitt.edu}
\theoremstyle{plain}
\newtheorem{example}[equation]{Example}
\newtheorem{definition}[equation]{Definition}
\newtheorem{theorem}[equation]{Theorem}
\newtheorem{corollary}[equation]{Corollary}
\def\op#1{{\operatorname{#1}}}
\newcommand{\ring}[1]{\mathbb{#1}}
\def\card{\op{card}}
\def\a{{\scriptsize\text{ani}}}
\def\good{{\scriptsize\text{good}}}
\def\SO{{\mathbf {SO}}}
\def\OO{{\mathbf O}}
\def\so{\mathfrak{so}}
\def\sp{\mathfrak{sp}}
\def\gl{\mathfrak{gl}}
\def\sl{\mathfrak{sl}}
\def\g{\mathfrak{g}}
\def\t{\mathfrak{t}}
\def\cc{\mathfrak{c}}
\def\DIV{{\mathfrak{D}}}
\def\RDIV{{\mathfrak{R}}}
\def\A{{\mathcal A}}
\def\C{{\mathcal C}}
\def\M{{\mathcal M}}
\def\P{{\mathcal P}}
\def\O{{\mathcal O}}
\def\tA{{\tilde{\mathcal A}}}
\def\tP{{\tilde{\mathcal P}}}
\def\tM{{\tilde{\mathcal M}}}
\def\tU{{\tilde{\mathcal U}}}
\begin{document}

\maketitle

{

\narrower{\it The study of orbital integrals on $p$-adic groups has turned
out to be singularly difficult. -- R. P. Langlands, 1992} % Real Igusa

}

\bigskip

This report describes some remarkable identities of integrals that have
been established by Ng\^o Bao Ch\^au.   My task will be to describe
why these identities -- collectively called the fundamental lemma (FL) --
took nearly thirty years to prove, and why they have particular
importance for investigations in the theory of automorphic
representations.

\section{basic concepts}

\subsection{origins of the fundamental lemma (FL)}\label{sec:origin}

To orient ourselves, we give special examples of
behavior that the theory is designed to explain.

\begin{example}\label{ex:sl2}  We recall the definition of the holomorphic discrete series representations
  of $SL_2(\ring{R})$.  For each natural number $n\ge 2$, let
  $V_{n,+}$ be the vector space of all holomorphic functions $f$ on
  the upper half plane ${\mathfrak h}$ such that
\[
\int_{\mathfrak h} |f|^2 y^{n-2} dx\, dy < \infty.
\]
$SL_2(\ring{R})$ acts on $V_{n,+}$:
\[
\begin{pmatrix} a & b \\ c & d \end{pmatrix} \cdot f(z) = 
(-b z + d ) ^{-n} f (\frac{\phantom{-}a z - c}{-b z + d}).
\]
Similarly, for each $n\ge 2$,  there is an anti-holomorphic discrete series
representation  $V_{n,-}$.  These infinite
dimensional representations have characters that exist as locally integrable
functions $\Theta_{n,\pm}$.  The characters are equal:
$\Theta_{n,+}(g)=\Theta_{n,-}(g)$, except when $g$ is conjugate to a
rotation
\[
\gamma = 
\begin{pmatrix} \phantom{-}\cos\theta & \sin\theta \\ -\sin\theta & \cos\theta\end{pmatrix}.
\] 
When $g$ is conjugate to $\gamma$, a remarkable character identity holds:
\begin{equation}\label{eqn:discrete-series}
\Theta_{n,-}(\gamma) - \Theta_{n,+}(\gamma) = 
\frac{e^{i (n-1) \theta} + e^{- i (n-1) \theta}}{e^{i\theta}-e^{-i\theta}}.
\end{equation}
It is striking that numerator of the difference of two characters of
infinite dimensional representations collapses to the character of a two
dimensional representation $\gamma\mapsto \gamma^{n-1}$ of the group $H$ of
rotations.  Shelstad gives general characters identities of this
sort~\cite{Shelstad:OI}. % Shelstad Th. 5.4.1.
% Knapp, page 345, page 35.
\end{example}

We find another early glimpse of the theory in a letter to Singer from
Langlands in 1974~\cite{L:singer:1974}.  Singer had expressed interest
in a particular alternating sum of dimensions of spaces of cusp forms
of $G=SL_2$ over a totally real number field $F$.  Langlands's reply
to Singer describes then unpublished joint work with
Labesse~\cite{LL:1979}.  Without going into details, we remark that in
the calculation of  this alternating sum, there is again a collapse in
complexity from the three dimensional group $SL_2$ to a sum indexed by
one-dimensional groups $H$ (of norm $1$ elements of totally imaginary
quadratic extensions of $F$).

These two examples fit into a general framework that have now led to
major results in the theory of automorphic representations and number
theory, as described in Section~\ref{sec:uses}.  Langlands holds that
methods should be developed that are adequate for the theory of
automorphic representations in its full natural generality.  This
means going from $SL_2$ (or even a torus) to all reductive groups,
from one local field to all local fields, from local fields to global
fields and back again, from the geometric side of the trace formula to
the spectral side and back again.  Moreover, interconnections between
different reductive groups and Galois groups should be included, as
predicted by his general principle of functoriality.

Thus, from these early calculations of Labesse and Langlands, the
general idea developed that one should account for alternating sums
(or $\kappa$-sums as we shall call them because they occasionally 
involve roots of unity other than $\pm1$) that appear in the harmonic
analysis on a reductive group $G$ in terms of the harmonic analysis on
groups $H$ of smaller dimension.  The FL is a concrete
expression of this idea.

\subsection{orbital integrals}

This section provides brief motivation about why researchers care
about integrals over conjugacy classes in a reductive group.  Further
motivation is provided in Section~\ref{sec:uses}.

It is a basic fact about the representation theory of a finite group
that the set of irreducible characters is a basis of the vector space
of class functions on the group.  A second basis of that vector space
is given by the set of characteristic functions of the conjugacy
classes in the group.  We will loosely speak of any linear relation
among the set of characteristic functions of conjugacy classes and the
set of irreducible characters as a {\it trace formula}.

More generally, we consider a reductive group $G$ over a local field.
Each admissible representation $\pi$ of $G$ defines a {\it
  distribution character:}
\[
f\mapsto
\op{trace}\,\int_G f(g)\pi(g)\,dg,\quad  f\in C_c^\infty(G),
\]
with $dg$  a Haar measure on $G$.
A trace formula in this context should be a linear
relation among characteristic functions of conjugacy classes and
distribution characters.  To put all terms of a trace formula on equal
footing, the characteristic function of a conjugacy class must also be
treated as a distribution, called an {\it orbital integral}:
\[
f\mapsto \OO(\gamma,f) = \int_{I_\gamma\backslash G} f(g^{-1}\gamma g)\, dg,\quad f\in C_c^\infty(G),
\]
where $I_\gamma$ is the centralizer of $\gamma\in G$.

The FL is a collection of identities among orbital integrals that may be used in a trace formula to
obtain identities among representations $\pi$.

\subsection{stable conjugacy}\label{sec:stable}

At the root of these $\kappa$-sum formulas is the distinction between
{\it ordinary conjugacy} and {\it stable conjugacy}.  
\begin{example}
A clockwise rotation and counterclockwise rotation
\[
\begin{pmatrix}\cos\theta &-\sin\theta\\\sin\theta &\phantom{-}\cos\theta\end{pmatrix}
\text{~~and~~}
\begin{pmatrix}\phantom{-}\cos\theta &\sin\theta\\-\sin\theta &\cos\theta\end{pmatrix}
\]
in $SL_2(\ring{R})$ are conjugate by the complex matrix $\begin{pmatrix}i&0\\0&-i\end{pmatrix}$, 
but they are not conjugate
in the group $SL_2(\ring{R})$ when $\theta\not\in\ring{Z}\pi$.  Indeed,
a matrix calculation shows that every element of $GL_2(\ring{R})$ that
conjugates the rotation to counter-rotation has odd determinant,
thereby falling outside $SL_2(\ring{R})$.  Alternatively, they are not
conjugate in $SL_2(\ring{R})$ because the character identity
(\ref{eqn:discrete-series}) separates them.
\end{example}

Let $G$ be a reductive group defined over a field
  $F$ with algebraic closure $\bar F$. 

\begin{definition}  An element
  $\gamma'\in G(F)$ is said to be {\it stably conjugate} to a given regular
  semisimple element $\gamma\in G(F)$ if $\gamma'$ is conjugate to $\gamma$ in
  the group $G(\bar F)$.
\end{definition}

There is a Galois cohomology group that can be used to study the conjugacy classes within
a given stable conjugacy class.  Let $I_\gamma$ be the centralizer of an element $\gamma\in G(F)$.
The centralizer is a Cartan subgroup when $\gamma$ is a (strongly) regular semisimple element.
Write $\gamma'=g^{-1}\gamma g$, for $g\in G(\bar F)$.
For every element $\sigma$ of the Galois group $\op{Gal}(\bar F/F)$,  we have
 $g\,\sigma(g)^{-1}\in I_\gamma(\bar F)$.  These elements define  in
the Galois cohomology group $H^1(F,I_\gamma)$ a class, which does not depend
on the choice of $g$.  It is the trivial class when
$\gamma'$ is conjugate to $\gamma$.

\begin{example} The centralizer $I_\gamma$ of a regular rotation
  $\gamma$ is the subgroup of all rotations in $SL_2(\ring{R})$.  The
  group $I_\gamma(\ring{C})$ is isomorphic to $\ring{C}^\times$.  Each
  cocycle is determined by the value $r\in
  I_\gamma(\ring{C})=\ring{C}^\times$ of the cocycle on the generator of
  $\op{Gal}(\ring{C}/\ring{R})$.  A given $r\in\ring{C}^\times$
  satisfies the cocycle condition when $r\in \ring{R}^\times$ and
  represents the trivial class in cohomology when $r$ is positive.
  This identifies the cohomology group:
\[
H^1(\ring{R},I_\gamma) = \ring{R}^\times/\ring{R}^\times_+ = \ring{Z}/2\ring{Z}.
\]
This cyclic group of order two classifies the two conjugacy classes
within the stable conjugacy class of a rotation.
\end{example}

When $F$ is a local field, $A=H^1(F,I_\gamma)$ is a
finite abelian group.  Every function $A\to\ring{C}$  has a
Fourier expansion as a linear combination of characters $\kappa$ of
$A$.  
The {\it theory of endoscopy} is the subject that studies stable
conjugacy through the separate characters $\kappa$ of $A$.  Allowing
ourselves to be deliberately vague for a moment, the idea of endoscopy
is that the Fourier mode of $\kappa$ (for given $I_\gamma$ and $G$)
produces oscillations that cause some of the roots of $G$ to cancel
away.  The remaining roots are reinforced by the oscillations and
become more pronounced.  The root system consisting of the pronounced
roots defines a group $H$ of smaller dimension than $G$.  With respect
to the harmonic analysis on the two groups, the mode of $\kappa$ on
the group $G$ should be related to the dominant mode on $H$.

\subsection{endoscopy}\label{sec:endoscopy}

The smaller group $H$, formed from the ``pronounced'' subset of the
roots of $G$, is called an endoscopic group.  Hints about how to
define $H$ precisely come from various sources.
\begin{itemize}
\item It should be constructed from the data $(G,I_\gamma,\kappa)$, with $\gamma$ regular semisimple.
\item Its roots should be a subset of the roots of $G$ (although $H$ need not
be a subgroup of $G$).
\item $H$ should have a Cartan subgroup $I_H\subset H$ isomorphic over $F$ to the Cartan subgroup
$I_\gamma$ of $G$, compatible with the Weyl groups of the two groups $H$ and $G$.
\item Over a nonarchimedean local field, the spherical Hecke algebra on $G$ should be related to the
spherical algebra on $H$.
\item It should generalize the example of Labesse and Langlands.
\end{itemize}

Every reductive group $G$ has a {\it dual group} $\hat G$ that is defined over $\ring{C}$.
The character group of a Cartan subgroup in the
dual group is the cocharacter group of a Cartan subgroup in $G$, and
the roots of the $\hat G$ are the coroots of $G$.  The dual of a
semisimple simply connected semisimple group is an adjoint group, and
vice versa.  For example, we have dualities $\hat GL(n) = PGL(n)$ and
$\hat Sp(2n)=SO(2n+1)$.  The duality between the root systems of
$Sp(2n)$ and $SO(2n+1)$ interchanges short and long roots.  The groups
$G$ and $\hat G$ have isomorphic Weyl groups.  We write $\hat T\subset
\hat G$ for a Cartan subgroup of $\hat G$.  There is a somewhat larger
dual group ${}^LG$ that is defined as a semidirect product of $\hat G$
with the Galois group of the splitting field of $G$.

There are indications that the groups $H$ should be defined through  
the dual $\hat G$ (or more precisely, ${}^LG$) of $G$:
\begin{itemize}
\item Langlands's principle of functoriality is a collection of
  conjectures, relating  the representation theory of groups
  when their dual groups are related.  Since the
  examples about $SL_2$ in Section~\ref{sec:origin} are representation
  theoretic, we should look to the dual.
\item The Satake transform identifies the spherical Hecke algebra with
  a dual object.  
\item The Kottwitz-Tate-Nakayama isomorphism identifies the group of
  characters on $H^1(F,I_\gamma)$ with a subquotient $\pi_0(\hat T^\Gamma)$
  of the dual torus $\hat T$.  (This subquotient is the group of
  components of the set of fixed points of $\hat T$ under an action of the Galois group of the
splitting field of $I_\gamma$.)
\end{itemize}

\begin{definition}[endoscopic group]  Let $F$ be a local field.
  The {\it endoscopic group} $H$ associated with $(G,I_\gamma,\kappa)$ is defined
  as follows.  By the Kottwitz-Tate-Nakayama isomorphism just
  mentioned, $\kappa$ is represented by an element of the dual torus,
  $\hat T$.  By an abuse of notation, we will also write $\kappa\in
  \hat T$ for this element.  The identity component of the
  centralizer of $\kappa$ is the dual $\hat H$ of a quasi-split
  reductive group $H$ over $F$.  The choice of a particular quasi-split form $H$
  among its outer forms is determined by the condition that there should
  be an isomorphism over $F$ of a Cartan subgroup $I_H$ of $H$ with
  $I_\gamma$ in $G$, compatible with their respective Weyl group actions.
\end{definition}

We write $\rho$ for the choice of quasi-split form $H$ among its outer
forms and refer to the pair $(\kappa,\rho)$ as {\it endoscopic data}
for $H$.  More generally, if $G$ is defined over any field, we can use
a pair $(\kappa,\rho)$, with $\kappa\in \hat T$, to define
an endoscopic group $H$ over that same field.

One of the challenging aspects of the FL is that it is an assertion of
direct relation between groups that are defined by a dual
relation.  Very limited information (such as
Cartan subgroups, root systems, and Weyl groups) can be transmitted
from the endoscopic group $H$ to $G$ through the dual group.

\section{a bit of Lie theory}

\subsection{characteristic polynomials}\label{sec:chevalley}

Let $G$ be a split reductive group over a field $k$ and let
$\g$ be its Lie algebra, with split Cartan subalgebra $\t$ and Weyl
group $W$.  We assume throughout this report that the characteristic of $k$ is sufficiently
large (more than twice the Coxeter number of $G$, to be precise).  The group
$G$ acts on $\g$ by the adjoint action.  By Chevalley,  the restriction  of regular
functions from $\g$ to $\t$ induces an isomorphism
\[
k[\g]^G = k[\t]^W.
\]
We let $\cc =  \op{Spec}\,(k[\t]^W)$, and let $\chi:\g\to\cc$ be the morphism deduced from
Chevalley's isomorphism.  The following example shows that $\chi:\g\to\cc$ is a generalization
of the characteristic polynomial of a matrix.

\begin{example}
If $G=GL(n)$, then $k[\g]^G$ is a polynomial ring,  generated by the coefficients $c_i$ of the characteristic
polynomial 
\begin{equation}
p(t)=t^n + c_{n-1} t^{n-1} +\cdots +c_0
\end{equation}
of a matrix $a\in \g=\gl(n)$.  The morphism $\chi:\g\to\cc$ can be identified with
the ``characteristic map'' that sends $a$ to $(c_{n-1},\ldots,c_0)$.
\end{example}

\subsection{Kostant section}

Kostant constructs a section $\epsilon:\cc\to\g$ of $\chi:\g\to\cc$
whose image lies in the set $\g^{reg}$ of regular elements of $\g$.
In simplified terms, this constructs a matrix with a given
characteristic polynomial.

\begin{example}
  When $\g=\sl(2)$, the Lie algebra consists of matrices of trace zero, and the characteristic polynomial 
   has the form $t^2 +c$.  The determinant $c$ generates  $k[\g]^G$.  
  The Kostant section maps $c$ to
\begin{equation}\label{ex:kostant}
\begin{pmatrix} 0 & -c\\ 1 & 0\end{pmatrix}.
\end{equation}
\end{example}

\begin{example}
  If $\g=\gl(n)$, we can construct the companion matrix of a given
  characteristic polynomial $p\in k[t]$, by taking the endomorphism $t$ of
  $R=k[t]/(p)$, expressed as a matrix with respect to the standard
  basis $1,t,t^2,\ldots,t^{n-1}$ of $R$.  The companion matrix is a
  section $\cc\to\g$ that is somewhat different from the Kostant
  section.  Nevertheless, the Kostant section can be viewed as a
  generalization of this that works uniformly for all Lie algebras
  $\g$.
\end{example}

\subsection{centralizers}

Each element $\gamma\in\g$ has a centralizer $I_\gamma$ in $G$.
If two elements of $\g^{reg}$ have the same image $a$ in $\cc$, then
their centralizers are canonically isomorphic.  By descent, there is a
regular centralizer $J_a$, for all $a$ in $\cc$,  that is canonically isomorphic to
$I_\gamma$ for every regular element $\gamma$ such that
$\chi(\gamma)=a$.

\begin{example} Suppose $G=SL(2)$.
  We may identify $J_a$ with the centralizer of~(\ref{ex:kostant}) to obtain the
   group of matrices with determinant $1$ of the form
\[
\begin{pmatrix} x & -y\, c\\ y & \phantom{y}x
\end{pmatrix}.
\]
\end{example}

\begin{example}
  If $\g=\gl(n)$, then the centralizer of the companion matrix with
  characteristic polynomial $p$ can be identified with the centralizer
  of $t$ in $GL(R)$, where $R=k[t]/(p)$.  An
  element of $\gl(R)$ centralizes the regular element $t$ if and only if it is a polynomial
  in $t$.  Thus, the centralizer in $\gl(R)$ is $R$ and the centralizer in $GL(R)$
 is $J_a = R^\times$.
\end{example}

\subsection{discriminant and resultant}

Let $\Phi$ be the root system of a split group $G$.  The differentials $d\alpha$ of 
roots define a  polynomial called the {\it discriminant}:
\begin{equation}\label{eqn:disc}
\prod_{\alpha\in\Phi} d\alpha
\end{equation}
on $\t$.  The polynomial is invariant under the action of the Weyl group 
$W$ and equals a function on $\cc$.
The divisor $\DIV_G$ of this polynomial $\cc$ is called the {\it discriminant divisor}.

\begin{example} Let $G=GL(n)$.  The Lie algebra $\t$ 
  can be identified with the diagonal matrix algebra with coordinates
  $t_1,\ldots,t_n$ along the diagonal.  The discriminant is
\[\prod_{i\ne j} (t_i - t_j).\]
This  is invariant under the action of the symmetric group on $n$ letters 
and can be expressed as a polynomial in the coefficients $c_i$ of the characteristic polynomial.
In particular, the discriminant of the characteristic polynomial $t^2 + b t + c$ is the usual
discriminant $b^2 - 4 c$.
\end{example}

If $H$ is a split endoscopic group of $G$, there is a morphism 
\begin{equation}\label{eqn:nu}
\nu:\cc_H\to \cc
\end{equation}
that comes from an isomorphism of Cartan subalgebras $\t_H\to\t$ and an inclusion
of Weyl groups $W_H\subset W$: $\cc_H = \t/W_H \to \t/W= \cc$.
There exists
a {\it resultant divisor} $\RDIV$ such that
\begin{equation}
\nu^*\DIV_G = \DIV_H + 2\,\, \RDIV.
\end{equation}

\begin{example} Let $H=GL(2)\times GL(2)$, embedded as a block
  diagonal subgroup of $GL(4)$.  Identify roots of $H$ with roots of $G$ under this embedding.
 The morphism $\nu:\cc_H\to \cc$,
  viewed in terms of characteristic polynomials, maps the pair
  $(p_1,p_2)$ of quadratic polynomials to the quartic $p_1p_2$.  Let
  $t^1_i,t^2_i$ be the roots of $p_i$, for $i=1,2$.  The resultant is
\[
\prod_{j\ne k} (t^j_1 - t^k_2).
\]
  The
resultant is symmetric in the roots of $p_1$ and in the roots of $p_2$
and thus can be expressed as a polynomial  in the coefficients
of $p_1$ and $p_2$.  It vanishes exactly when $p_1$ and $p_2$ have a common root.
\end{example}

\section{the statement of the FL}\label{sec:statement}

Let $G$ be a reductive group scheme over the ring of integers $\O_v$ of
a nonarchimedean local field $F_v$ in positive characteristic.  Let $q$ be the cardinality
of the residue field $k$.  The map $\chi:\g\to\cc$
is compatible with stable conjugacy in the sense that two
regular semisimple elements in $\g(F_v)$ are stably conjugate exactly
when they have the same image in $\cc(F_v)$.    The results of
Section~\ref{sec:stable} (adapted to the Lie algebra)
show that each element $\gamma$ stably conjugate to
$\epsilon(a)$ carries a cohomological invariant in $H^1(F_v,J_a)$, which is trivial for
elements conjugate to $\epsilon(a)$. 

For each regular semisimple element $a\in \cc(F_v)$ and character 
\[
\kappa:H^1(F_v,J_a)\to\ring{C}^\times,
\]
we write
 $\langle\kappa,\gamma\rangle$ for the pairing of
of $\kappa$ with the cohomological invariant of $\gamma$.
A 
$\kappa$-orbital integral is defined to be
\begin{equation}\label{eqn:k-orbital}
\OO_\kappa(a) = \sum_ {\chi\gamma= a} 
\int_{I_\gamma\backslash G(F_v)} \langle\kappa,\gamma\rangle 
1_{\g(\O_v)} (\op{Ad}\, g^{-1}(\gamma)) dg,
\end{equation}
where $I_\gamma$ centralizes $\gamma$, and the sum runs over
representatives of the conjugacy classes in $\g(F_v)$ with image $a$.
Here $1_{\g(\O_v)}$ is the characteristic function of $\g(\O_v)$.  A
Haar measure on $G$ has been fixed that gives $G(\O_v)$ volume $1$.

The character $\kappa$ determines a reductive group scheme $H$ over
$\O_v$, according to the construction of~(\ref{sec:endoscopy}).
In general, we add
a subscript $H$ to indicate quantities constructed for $H$, analogous
to those already constructed for $G$.  In particular,
let $\cc_H$ be the Chevalley quotient of the Lie algebra of $H$. There is
a morphism $\nu:\cc_H\to\cc$.    When $\kappa$ is trivial, we
write $\SO$ for $\OO_\kappa$.

Here is the main theorem of Ng\^o~\cite{NBC:2010}.

\begin{theorem}[fundamental lemma (FL)]
  Assume that the characteristic of $F_v$ is greater than twice the
  Coxeter number of $G$.  For all regular semisimple elements $a\in
  \cc_H(\O_v)$ whose image $\nu(a)$ in $\cc$ is also regular
  semisimple, the $\kappa$ orbital integral of $\nu(a)$ in $G$ is
  equal to the stable orbital integral of $a$ in $H$, up to a power of
  $q$:
\[
\OO_\kappa(\nu(a)) = q^{r_v(a)}\SO_H(a),\quad{ where~~ } r_v(a) = \deg_v(a^*\RDIV).
\]
\end{theorem}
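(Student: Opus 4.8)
The plan is to prove the FL by a global-to-local passage, realizing local orbital integrals as the contribution of a single point to the cohomology of a global object — the Hitchin fibration — and then comparing the cohomologies for $G$ and $H$ by means of perverse sheaves and the action of a symmetry group. First I would set up the Hitchin fibration: fix a smooth projective curve $X$ over the finite field $k$ (or over $\bar k$) and a sufficiently positive line bundle $D$, and let $\M = \M_D$ be the moduli stack of Hitchin pairs $(E,\phi)$ with $E$ a $G$-bundle on $X$ and $\phi$ a section of $\op{ad}(E)\otimes D$, together with the Hitchin map $f:\M\to\A$, where $\A = \A_D$ is the affine space of sections of $\cc_D := \cc\times^{\mathbf G_m}D$. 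The fibers of $f$ are the ``Hitchin fibers'' $\M_a$, and by a product formula (due to Ng\^o) the cohomology of an anisotropic $\M_a$ decomposes, via adelic descent, into a product over the closed points $v$ of $X$ of local affine-Springer-fiber contributions; at a place $v$ where $a$ meets $\DIV_G$, that local factor encodes precisely the orbital integral $\OO(\nu(a)_v)$. The Picard stack $\P_a$ of $J_a$-torsors on $X$ acts on $\M_a$, and the connected components / character group of $\P_a$ are governed by the same Tate–Nakayama data that produce the characters $\kappa$; thus the $\kappa$-isotypic part of $H^*(\M_a)$ is the global avatar of the local $\kappa$-orbital integral, and the stable part ($\kappa$ trivial) for $H$ is the global avatar of $\SO_H$.

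Next I would invoke the main geometric input: the support theorem. The key point is that $f_*\ring{Q}_\ell$ (restricted to a suitable open ``good'' or anisotropic locus $\A^\good$) is, by Ng\^o's support theorem, a direct sum of shifted perverse sheaves whose supports are the \emph{whole} base $\A^\good$ — no small strata contribute. This rigidity means that an identity of stalks of (a direct summand of) $f_*\ring{Q}_\ell$ at \emph{one} point of $\A^\good$ propagates to an isomorphism of the corresponding perverse sheaves over all of $\A^\good$, and conversely an isomorphism generically can be checked at a single convenient point. So the strategy is: (i) for the endoscopic datum $\kappa$, identify inside $f_*\ring{Q}_\ell$ the $\kappa$-isotypic summand $(f_*\ring{Q}_\ell)_\kappa$ for the action of $\pi_0(\P)$; (ii) compare it with $(f^H_*\ring{Q}_\ell)_\st$, the stable summand for the Hitchin fibration of $H$, pulled back along $\nu_*:\A_H\to\A$. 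These two perverse sheaves live, after the cohomological shift and Tate twist recorded by the resultant divisor $\RDIV$ (this is exactly where the factor $q^{r_v(a)}$ comes from — $\RDIV$ measures the codimension discrepancy between the $G$- and $H$-Hitchin fibers), a priori on different but commensurable bases.

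The heart of the argument — and the step I expect to be the main obstacle — is establishing the \textbf{geometric stabilization}: an isomorphism
\[
\nu_*\bigl((f^H_*\ring{Q}_\ell)_\st\bigr)\;\cong\;(f_*\ring{Q}_\ell)_\kappa\otimes(\text{shift/twist by }\RDIV)
\]
of perverse sheaves over the common locus of $\A$. By the support theorem this reduces to checking the isomorphism over the open dense locus where $a$ avoids the discriminant, i.e.\ for unramified/generic $a$; there the Hitchin fibers are abelian varieties (torsors under $\P_a$), their cohomology is the exterior algebra on $H^1$, and the comparison becomes the statement that the $\kappa$-part of the Tate module for $G$ matches the full Tate module for $H$ — which is precisely the defining property of the endoscopic group $H = H(\kappa)$ (its root system is the ``$\kappa$-fixed'' subsystem, $\hat H$ is the identity component of the centralizer of $\kappa$). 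Making this rigorous requires: a careful analysis of $\pi_0(\P_a)$ and its pairing with $\kappa$ via the Tate–Nakayama/Kottwitz isomorphism; controlling the non-anisotropic and the boundary behavior so that the support theorem genuinely applies (this forces the restriction to large characteristic, $p>2h$, and the passage to a suitably truncated/anisotropic open of $\A$); and a delicate bookkeeping of stacky points, $\mathbf G_m$-gerbes, and the precise normalization of shifts and Tate twists so that the cohomological-degree/weight discrepancy is exactly $2\,\RDIV$ locally and $r_v(a)=\deg_v(a^*\RDIV)$ globally.

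Finally I would descend from the geometric statement back to the arithmetic one: apply the Grothendieck–Lefschetz trace formula over $k$ to both sides of the perverse-sheaf isomorphism. Counting $k$-points of $\M_a$ weighted by $\kappa$ yields $\OO_\kappa(\nu(a))$ (via the product formula, the global count factors into the product of local $\kappa$-orbital integrals, all but finitely many of which are $1$ because $a$ is integral and regular there), counting $k$-points of $\M^H_{a}$ yields $\SO_H(a)$, and the trace of Frobenius on the $\RDIV$-twist contributes exactly $q^{r_v(a)}$. The local statement of the theorem — for one place $v$ and one $a\in\cc_H(\O_v)$ — then follows because any such local datum can be ``globalized'': one produces a curve $X$, a point $v$, and a global section $a\in\A_H(k)$ that restricts to the prescribed jet at $v$ and is as generic as needed elsewhere (a Bertini/approximation argument, using $D$ sufficiently positive), so that the single local factor we care about is isolated and equals the ratio of the two global counts.
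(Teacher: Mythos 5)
Your overall architecture matches Ng\^o's closely — globalize via the Hitchin fibration, use the product formula to link Hitchin fibers to affine Springer fibers, invoke the support theorem to propagate from a nice open locus, pass back to orbital integrals by Grothendieck--Lefschetz, and globalize a given local datum — but there is a genuine gap in the step you identify as the ``heart of the argument.''

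You claim that the support theorem reduces the comparison of $\nu^*L_\kappa$ with $L_{H,\st}$ to a dense open locus ``where $a$ avoids the discriminant,'' that there the Hitchin fibers are abelian varieties (torsors under $\P_a$), and that the comparison then becomes a Tate-module statement dual to the definition of $\hat H$. This is false for the $G$-side. Since $\nu^*\DIV_G = \DIV_H + 2\,\RDIV$ and $\deg(a^*\RDIV) = r = \dim\A - \dim\A_H > 0$ whenever $H\neq G$, the section $\nu(a)$ \emph{necessarily} meets the discriminant divisor of $G$: the image $\nu(\A_H)$ lies entirely inside the $G$-discriminant locus. Consequently the $G$-Hitchin fiber $\M_{\nu(a)}$ is never a $\P_{\nu(a)}^0$-torsor over any open of $\A_H$; by the product formula $[\M_{\nu(a)}/\P(J'_a)]$ is a nontrivial product of local affine Springer fibers at the points where $a$ hits $\RDIV$, and these local pieces — finite chains of $\ring{P}^1$'s with $\ring{G}_m\times\ring{Z}$-actions, as in the paper's Example~6.4 — are precisely what carry the content of the identity, including the power $q^{r_v(a)}$. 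Your plan attributes that factor to a formal Tate twist by $\RDIV$, but Ng\^o obtains it by an actual computation of groupoid masses at those places. A Tate-module isomorphism for $\P^0$ alone does not determine the cohomology of $\M_a$; you must handle the local singular contributions explicitly.

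There is also a difference in logical order that matters for rigor. Ng\^o does not prove a perverse-sheaf isomorphism directly over the nice open $\tU$ and then extend. He first proves the \emph{mass identity} on $\tU$ by the explicit local computations above (using transversality so that at each $v$ the local degrees $(d_{H,v},r_v)$ are $(0,0)$, $(1,0)$, or $(0,1)$, making the affine Springer fibers of dimension $\le 1$); then he passes through Grothendieck--Lefschetz and a Chebotarev-density argument over all finite extensions $k'/k$ to deduce an isomorphism of the \emph{semisimplifications} of the local systems on $\tU$; only then does the support theorem (plus the BBDG decomposition/continuity principle) extend the isomorphism to $\tA_H^\good$, and Grothendieck--Lefschetz is applied again to recover masses everywhere. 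Your plan to ``compare stalks of perverse sheaves at one point'' and propagate skips both the explicit local input and the density/semisimplification step needed to manufacture a sheaf-theoretic isomorphism out of a point-count identity. Finally, one minor wording issue: it is not the root system of $H$ that is the ``$\kappa$-fixed'' subsystem of $G$, but rather the \emph{coroots} of $H$ are the roots $\hat\alpha$ of $\hat G$ on which $\kappa$ takes the value $1$; this dualization is exactly what makes the FL an identity between directly related groups mediated through the dual.
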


A sketch of Ng\^o's proof of the FL  appears in Section~\ref{sec:lmf}.

Over the years from the time that Langlands first conjectured the FL
until the time that Ng\^o gave its proof, the FL has been transformed
into simpler form~\cite{Langlands:debuts}.  The statement of the FL
appears here in its simple form.  Section~\ref{sec:reduce} makes a
series of comments about the original form of the FL and its
reduction to this simple form.  Except for that section, our
discussion is based on this simple form of the FL.  In particular, we
assume that the field $F_v$ has positive characteristic and that the
conjugacy classes live in the Lie algebra rather than the group.

Analogous identities (transfer of Schwartz functions)
on real reductive groups have been established by 
Shelstad~\cite{Shelstad:OI}.  Her work gives a precise form to the
 idea that the oscillations of a character
$\kappa$ cause certain roots to cancel away and others to become more
pronounced: normalized $\kappa$-orbital integrals extend smoothly
across the singular hyperplanes of some purely imaginary roots $\alpha$,
but jump across others.  At a philosophical distance, Ng\^o's use of
perverse sheaves can be viewed as $p$-adic substitute for 
differential operators, introduced by Harish-Chandra to study
invariant distributions near a singular element in the group and
adopted by Shelstad as a primary tool.

\section{affine Springer fibers}

\subsection{spectral curves}

Calculations in special cases show why the FL is essentially geometric
in nature, rather than purely analytic or combinatorial. We recall a
favorite old calculation of mine of the orbital integrals for $\so(5)$
and $\sp(4)$, the rank two odd orthogonal and symplectic Lie
algebras~\cite{hyperelliptic-curves}.  Let $F_v$ be a nonarchimedean
local field of residual characteristic greater than $2$.  Let $k$ be
the residue field with $q$ elements.  Choose $a\in \cc(F_v)$  and let $0,\pm t_1,\pm t_2$ be
the eigenvalues of the Kostant section $\gamma=\epsilon(a)$ in $\so(5)\subset \gl(5)$.
Assume that there is an odd
natural number $r$ such
\[
|\alpha(\gamma)| = q^{-r/2},
\]
for every root $\alpha$ of $\so(5)$.  We use the eigenvalues to
construct an elliptic curve $E_a$ over $k$, given by $y^2 =
(1-x^2\tau_1)(1-x^2\tau_2)$, where $\tau_i$ is the image of
$t_i^2/\varpi^r$ in the residue field, for a uniformizer $\varpi$.  By
direct calculation we find that the stable orbital integral
$\SO(a,f)$ of a test function $f$ equals the number of points on the
elliptic curve:
\begin{equation}\label{eqn:elliptic}
A(q) + B(q)\, \card( E_a(k)),
\end{equation}
up to some rational functions $A$ and $B$, depending on $f$.

Similarly, in the group $\sp(4)\subset\gl(4)$, there is an element
${{a'}}$ with related eigenvalues $\pm t_1,\pm t_2$.  According to the
general framework of (twisted) endoscopy, there should be a
corresponding function $f'$ on $\sp(4)$ such that the stable orbital
integral $\SO(a',f')$ in $\sp(4)$ is equal to (\ref{eqn:elliptic}).  A
calculation of the orbital integral of $f'$ gives a similar formula,
with a different elliptic curve $E'_{{a'}}$, but otherwise identical
to (\ref{eqn:elliptic}).  The elliptic curves $E_a$ and $E'_{{a'}}$
have different $j$-invariants (which vary with $a$ and ${{a'}}$).  The
proof of the desired identities of orbital integrals in this case is
obtained by producing an isogeny between $E_a$ and $E'_{{a'}}$.  (The
 identities of orbital
integrals are quite nontrivial, even though the Lie algebras $\so(5)$
and $\sp(4)$ are abstractly isomorphic.)

In a similar way in higher rank,  the spectral curves
\[
y^2 = (1-x^2 \tau_1)(1-x^2 \tau_2)\cdots (1-x^2 \tau_n)
\]
appear  in calculations of orbital integrals for
$\so(2n+1)$.  When orbital integrals are computed by brute force,
these curves appear as freaks of nature.  As it turns
out, they are not freaks at all, merely perverse.  One of the major
challenges of the proof of the FL and one of the major
triumphs of Ng\^o has been to find the natural geometrical setting that combines
orbital integrals and spectral curves.

\subsection{orbital integrals as affine Springer fibers}\label{sec:coset}

An orbital integral can be computed by solving a coset counting
problem.  The value of the integrand (\ref{eqn:k-orbital}) is
unchanged if $g$ is replaced with any element of the coset
$g\,G(\O_v)$.  The integral is thus expressed as a discrete sum over
cosets of $G(\O_v)$ in $G$ modulo the group action by $I_\gamma$.
Each coset $g\, G(\O_v)$ contributes a root of unity
$\langle\kappa,\gamma\rangle$ or $0$ to the value of the integral
depending on whether $\op{Ad}\,g^{-1}\gamma \in \g(\O_v)$ (again
modulo symmetries $I_\gamma$).  This interpretation as a coset
counting problem makes the FL appear to be a matter of simple
combinatorics.  However, purely combinatorial attempts to prove the FL
have failed (for good reason).

Let $\M_v(a,\bar k)$ be the set of cosets that fulfill the support
condition (\ref{eqn:k-orbital}) of the integral over $\bar k$:
\[
\M_v(a,\bar k) = \{g\in G(\bar F_v)/G(\bar \O_v) 
\mid \op{Ad}\,g^{-1} \gamma_0 \in \g(\bar\O_v)\}, 
\quad \gamma_0 = \epsilon(a).
\]

Kazhdan and Lusztig showed that the coset space $G(\bar
F_v)/G(\bar\O_v)$ is the set of $\bar k$-points of an inductive limit
of schemes called the {\it affine Grassmannian}.  Moreover, $\M_v(a,\bar k)$ itself
is the set of points of an ind-scheme $\M_v(a)$, called the {\it affine
Springer fiber}~\cite{KL:1988}.

Each irreducible component of $\M_v(a)$ has the same dimension.  This
dimension, $\delta_v(a)$, is given by a formula of
Bezrukavnikov~\cite{Bezrukavnikov}.  From that formula, it follows
that the dimension of the affine Springer fiber of $\nu(a)$ in $G$
exceeds the dimension of the affine Springer fiber of $a$ in $H$ by
precisely $r_v(a)$.  The factor $q^{r_v(a)}$ that appears in the FL is
forced to be what it is because of this simple dimensional analysis.

Goresky, Kottwitz, and MacPherson made an extensive investigation of
affine Springer fibers and conjectured that their 
cohomology groups are pure.  Assuming this conjecture, they prove the
FL for elements whose centralizer is an unramified Cartan
subgroup~\cite{GKM:2004}.  They prove the purity result in particular
cases by constructing pavings of the affine Springer
fibers~\cite{GKM:2006}.  

Laumon has made a systematic investigation of
the affine Springer fibers for unitary groups.  Ng\^o joined the effort, and together they
succeeded in giving a complete proof of the FL for unitary groups~\cite{LN:08}.

Ng\^o encountered two major obstacles in trying to generalize this
earlier work to an arbitrary reductive group.  These approaches
calculate the equivariant cohomology by passing to a fixed point set in $\M_v(a)$
under a torus action.  (In the case of unitary
groups, over a quadratic extension each endoscopic group becomes
isomorphic to a Levi subgroup of $GL(n)$.  The torus action comes from
the center of this Levi.)  However, in general, a nontrivial torus
action on the affine Springer fiber simply doesn't exist.

The second serious obstacle comes from the purity conjecture itself.
In accordance with Deligne's work, Ng\^o believed that the task of
proving purity results should become easier when the affine Springer
fibers are combined into families rather than treated in isolation.
With this in mind, he started to investigate families varying over a
base curve $X$.  This moves us from local geometry of a $p$-adic
field $F$ to the global geometry of the function field of $X$.  He
found that the Hitchin fibration is the global analogue of affine
Springer fibers.  The Hitchin fibers will be
described in the next section.  Deligne's purity theorem applies in this
setting~\cite{Deligne:Weil2}.

\section{Hitchin fibration}

The Hitchin fibration was introduced in 1987 in the context of
completely integrable systems~\cite{Hitchin:87}.  Roughly, the Hitchin
fibration is the stack obtained when the characteristic map
$\g\to\cc$ varies over a curve $X$.  Ng\^o carries out all geometry
 in the language of stacks without compromise, as developed in
\cite{LMB:2000}.  For this reason, {\it groupoids} (a category in which
every morphism is invertible) appear with increasing frequency throughout this report.

Fix a smooth projective curve $X$ of genus $g$ over a finite field
$k$.  We now shift perspective and notation, allowing the
constructions in Lie theory from previous sections to vary over the
base curve $X$.  In particular, we now let $G$ be a quasi-split
reductive group over $X$ that is locally trivial in the etale topology on
$X$.  Let $\g$ be its Lie algebra $G$ and $\cc$ the space of
characteristic polynomials, both now schemes over $X$.

Let $D$ be a line bundle on $X$. For technical reasons (stemming from
the $2$ in the structure constants of $\sl_2$), we assume that $D$ is
the square of another line bundle.  At one point in Ng\^o's proof of
the FL, it is necessary to allow the degree of $D$ to become
arbitrarily large~(\ref{sec:lmf}).  We place a subscript $D$ to
indicate the tensor product with $D$: $\g_D = \g\otimes_{\O_X}\!\!D$,
etc.

We let $\A$ be the space of global sections on $X$ with values in
$\cc_D=\cc\otimes_{\O_X}\!\!D$.  The group $G$ acts on $\g$ by the
adjoint action.  Twisting $\g$ by any $G$-torsor $E$ gives a
vector bundle $\op{Ad}(E)$ over $X$.

\begin{definition}
  The {\it Hitchin fibration} $\M$ is the stack given as follows.  For any
  $k$-scheme $S$, $\M(S)=[\g_D/G](X\times S)$ is the groupoid whose
  objects are pairs $(E,\phi)$, where $E$ is $G$-torsor over $X\times
  S$ and $\phi$ is a section of $\op{Ad}(E)_D$.
\end{definition}

There exists a morphism $f:\M\to\A$, obtained as a ``stacky''
enhancement of the characteristic map $\chi:\g\to\cc$ over $X$.  In
greater detail, $\chi:\g\to\cc$ gives successively
\[
%\begin{array}{lll}
 \hbox{}[\g_D/G]\to \cc_D,\qquad
 \hbox{}[\g_D/G](X\times S)  \to \cc_D(X\times S), \qquad
 \M(S) \to \A(S), \qquad
f:\M \to \A.
%\end{array}
\]
In words, the characteristic polynomial of $\phi$ is a section of
$X\times S$ with values in $\cc_D$; that is, an element of $\A(S)$.
We write $\M_a$ for the fiber of $\M$ over $a\in \A$.  This is the {\it Hitchin fiber}.

The centralizers $J_a$, as we vary $a\in \cc$, define a smooth group
scheme $J$ over $\cc$.  Now select on $\A$ an $S$-point: $a:S\to\A$.
There is a groupoid $\P_a(S)$ whose objects are $J_a$-torsors on
$X\times S$.  Moreover, $\P_a(S)$ acts on $\M_a(S)$ by twisting a pair
$(E,\phi)$ by a $J_a$-torsor.  As the $S$-point $a$ varies, we obtain
a {\it Picard stack} $\P$ acting fiberwise on the Hitchin fibration $\M$.  

\begin{example} We give an extended example with $G=GL(V)$, the
  general linear group of a vector space $V$.  In its simplest form, a
  pair $(E,\phi)$ is what we obtain when we allow an element $\gamma$ of
  the Lie algebra $\op{end}(V)$ to vary continuously along the curve
  $X$.  As we vary along the curve, the vector space $V$ sweeps out a
  vector bundle $E$ on $X$, and the element $\gamma\in\op{end}(V)$ sweeps
  out a section $\phi$ of the bundle $\op{end}(E)_D$.

  For each pair $(E,\phi)$, we evaluate the characteristic map
  $v\mapsto \chi(\phi_v)$ of the endomorphism $\phi$ at each point
  $v\in X$. This function belongs to the set $\A$ of a global sections
  of the bundle $\cc_D$ over $X$.  This is the morphism $f:{\M}\to
  {\A}$.

  {\it Abelian varieties} occur naturally in the Hitchin fibration.
  For each section $a=(c_{n-1},\ldots,c_0)\in \A$, the characteristic
  polynomial
\begin{equation}\label{eqn:spectral}
t^n + c_{n-1}(v) t^{n-1} + \cdots+ c_0(v)=0,\quad v\in X,
\end{equation} 
defines an $n$-fold cover $Y_a$ of $X$ (called the {\it spectral
  curve}).  By construction, each point of the spectral curve is a
root of the characteristic polynomial at some $v\in X$.  We consider
the simple setting when $Y_a$ is smooth and the discriminant of the
characteristic polynomial is sufficiently generic.  A pair $(E,\phi)$
over the section $a\in\A$ determines a line (a one-dimensional eigenspace
of $\phi$ with eigenvalue that root) at each point of the spectral
curve, and hence a line bundle on $Y_a$.  This establishes a map from
points of the Hitchin fiber over $a$ to $\op{Pic}(Y_a)$, the group of
line bundles on the spectral curve $Y_a$.  Conversely, just as linear
maps can be constructed from eigenvalues and eigenspaces, Hitchin
pairs can be constructed from line bundles on the spectral curve
$Y_a$.  The identity component $\op{Pic}^0(Y_a)$ is an abelian
variety.
\end{example}

\subsection{proof strategies}

At this point in the development, it would be most appropriate to
insert a book-length discussion of the geometry of the Hitchin
fibration, with a full development and many examples.  As Langlands
speculates in his review of Ng\^o's poof, ``an exposition genuinely
accessible not alone to someone of my generation, but to
mathematicians of all ages eager to contribute to the arithmetic
theory of automorphic representations, would be, perhaps, \ldots close
to 700 pages'' \cite{L:Ngo}.

To cut  $700$ pages short, what are the essential ideas?  

First, as mentioned above, the Hitchin fibration is the correct global
analogue of the (local) affine Springer fiber.  The relationship
between the Hitchin fiber $\M_a$ and the affine Springer fiber
$\M_v(a)$ can be precisely expressed as a factorization of
categories~(\ref{eqn:product-groupoid}): $\M_a$ modulo symmetries as a
product of $\M_v(a)$ modulo their symmetries as $v$ runs over closed
points of $X$.  Through the affine Springer fibers, the Hitchin
fibration can be used to study orbital integrals and the FL.

Second, the Hitchin fibration should be understood insofar as possible
through its Picard symmetries $\P$.  The obvious reason for this is
that it is generally a good idea to study symmetry groups.  The deeper
reason for this has to do with endoscopy.  The objects of the Picard
stack are torsors of the centralizer $J_a$.  Although the relationship
between $G$ and $H$ is mediated through dual groups, the relationship
between centralizers is direct: over $\cc_H$, there is a canonical
homomorphism from the regular centralizer $J$ of $G$ to the regular
centralizer $J_H$ of $H$:
\begin{equation}\label{eqn:JH}
\nu^*J\to J_H.
\end{equation}
Thus, their respective Picard stacks  are also
directly related and information passes fluently between them.  We should try to prove the FL
 largely at the level of Picard stacks.

 Third, by working directly with the Hitchin fibration, the difficult
 purity conjecture of Kottwitz, Goresky, and MacPherson can be
 bypassed.  Finally, continuity arguments may be used, as explained in~(\ref{sec:continuity}).

\subsection{perverse cohomology sheaves}

We give a brief summary without proofs of some of the main results
proved by Ng\^o about the perverse cohomology sheaves of the Hitchin
fibration.

There is an etale open subset $\tA$ of $\A\otimes_k\bar k$ that has
the technical advantage of killing unwanted monodromy.  The tilde will
be used consistently to mark quantities over $\tA$.  For example, if
we write $f^\a:\M^\a\to\A^\a$ for the Hitchin fibration, restricted to
the open set of anisotropic elements of $\A$, then $\tilde
f^\a:\tM^\a\to\tA^\a$ is the corresponding Hitchin fibration over the
anisotropic part of $\tA$.

 The conditions of Deligne's purity theorem are satisfied, so that $\tilde f_*^\a 
\bar{\ring{Q}}_\ell$ is isomorphic to a direct sum of perverse cohomology sheaves:
\[
{}^p\!H^n(\tilde f^\a_* \bar{\ring{Q}}_\ell)[-n].
\]

The action of $\tilde\P^\a$ on $\tilde\M^\a$ gives an action on the
the perverse cohomology sheaves, which factors
through the sheaf of components $\pi_0=\pi_0(\tilde\P^\a)$. 
The sheaf $\pi_0$ is an explicit quotient of the constant sheaf $X_*$ of
cocharacters, and hence $X_*$ acts on the perverse cohomology sheaves
through $\pi_0$.  As a result,  the perverse cohomology
sheaves break into a direct sum of $\kappa$-isotypic pieces
\begin{equation}\label{eqn:L}
L_\kappa = {}^p\!H^n(\tilde f^\a_* \bar{\ring{Q}}_\ell)_\kappa,
\end{equation}
as $\kappa$ runs over elements in the dual torus $\hat T$.  (By
duality, the cocharacter group $X_*$ is the group of characters of the
dual torus, which gives the pairing between $\hat T$ and $X_*$.)

We use the same curve $X$ and same line
bundle $D$  both for $G$ and for its endoscopic groups $H$.
The morphism  $\nu:\cc_H\to\cc$ from (\ref{eqn:nu}) 
extends to give $\nu:\cc_{H,D}\to \cc_D$ and then by taking sections of these
bundles, we obtain a morphism between their spaces of global sections:
\begin{equation}\label{eqn:nuA}
\nu:\A_H\to\A.
\end{equation}
We hope that no confusion arises by using the same symbol $\nu$ for all of these morphisms.

For each $\kappa\in\hat T$, there is a closed subspace of $\tA_\kappa$ of $\tA$
consisting of elements $a$ whose ``geometric monodromy'' lies in the centralizer
of $\kappa$ in the dual group ${}^LG$.  
The support of $L_\kappa$  lies in $\tA^\a_\kappa$.
Each subspace $\tA_\kappa$ is in fact
the disjoint union of the images of closed immersions 
\begin{equation}
\nu:\tA_H\to\tA
\end{equation}
coming from endoscopic groups $H$
with endoscopic data $(\kappa,\ldots)$. 
The geometric content of the FL is to be found in the comparison of $\nu^* L_\kappa$ with
\begin{equation}\label{eqn:LH}
L_{H,st} = {}^p\!H^{n+2r}(\tilde
  f^\a_{H,*}\bar{\ring{Q}}_\ell)_{st}(-r), \text{ where } r = \dim(\A) - \dim (\A_H).
\end{equation}
The subscript {\it st} indicates the isotypic piece with trivial character $\kappa=1$.

The anisotropic locus $\tA^\a$ admits a stratification by a numerical
invariant $\delta:\tA\to\ring{N}$:
\[
\tA^\a = \coprod_{\delta\in\ring{N}}\tA^\a_\delta.
\]
There is an open set $\tA^\good$ of $\tA^\a$, given as a union of some
strata $\tA^\a_\delta$ that satisfy:
\begin{equation}\label{eqn:codim}
\op{codim}(\tA_\delta^\a) \ge \delta.
\end{equation}

\subsection{support theorem}

The proof of the following  theorem about the support of the
perverse cohomology sheaves of the Hitchin fibration constitutes the
deepest part of the proof of the FL.

\begin{theorem}[support theorem]\label{lemma:support}
Let $Z$ be the support of a geometrically simple factor of $L_\kappa$.  
If $Z$ meets $\nu(\tA_H^\good)$ for some endoscopic group $H$  with 
 data $(\kappa,\ldots)$, 
then $Z=\nu(\tA_H^\a)$.  In fact, there is a unique such $H$.
\end{theorem}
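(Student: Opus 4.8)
The plan is to establish the support theorem by combining three ingredients, each already foreshadowed in the excerpt: the codimension estimate \eqref{eqn:codim} on the ``good'' locus, a general support inequality for perverse sheaves arising from proper maps (Goresky--MacPherson--type bounds, enhanced by the action of the Picard stack $\P$), and the explicit description of the strata $\tA_\kappa$ as disjoint unions of images $\nu(\tA_H)$ of endoscopic bases. First I would recall the general principle: for a geometrically simple factor of a perverse cohomology sheaf ${}^p\!H^n(\tilde f^\a_* \bar{\ring{Q}}_\ell)$ attached to a proper morphism $\tilde f^\a$, the support $Z$ of that factor cannot be too small --- its codimension in $\tA^\a$ is controlled by the amplitude of the pushforward, hence ultimately by the dimensions of the fibers (the numbers $\delta_v(a)$ that enter Bezrukavnikov's formula). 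Concretely, one obtains a bound of the form $\op{codim}(Z) \le d(Z)$ where $d(Z)$ measures how much cohomology the fibers over a neighborhood of $Z$ carry beyond the middle degree. This is where the hypothesis \eqref{eqn:codim}, that $\tA^\good$ consists of strata with $\op{codim}(\tA_\delta^\a)\ge\delta$, does its work: it forces the ``extra'' cohomology to be concentrated, so any simple factor meeting $\nu(\tA_H^\good)$ must have support of small codimension, in fact codimension exactly $r=\dim\A-\dim\A_H$.

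Next I would use the Picard symmetry. The action of $\tilde\P^\a$ factors through the sheaf of components $\pi_0(\tilde\P^\a)$, which is a quotient of the constant sheaf $X_*$; this is precisely what produces the $\kappa$-decomposition in \eqref{eqn:L}. The key point is that the support $Z$ of a $\kappa$-isotypic simple factor must be contained in the locus $\tA_\kappa$ where the geometric monodromy of $\pi_0$ lands in the centralizer of $\kappa$ --- otherwise the $\kappa$-isotypic component would vanish generically on $Z$, contradicting simplicity. Since $Z$ is irreducible, it lies in a single connected component of $\tA_\kappa$, and by the description quoted in the excerpt each such component is exactly an image $\nu(\tA_H^\a)$ for a unique endoscopic datum $(\kappa,\ldots)$. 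Combining with the codimension computation: $Z\subseteq \nu(\tA_H^\a)$, the latter has codimension exactly $r$, and $Z$ also has codimension $r$ (from the previous paragraph, using that $Z$ meets the good locus $\nu(\tA_H^\good)$), so $Z$ is a dense subvariety of the irreducible $\nu(\tA_H^\a)$; being closed, $Z=\nu(\tA_H^\a)$. Uniqueness of $H$ follows from the disjointness of the components of $\tA_\kappa$.

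The main obstacle --- and the reason this is ``the deepest part of the proof of the FL'' --- is the first step: establishing the sharp codimension bound $\op{codim}(Z)\le \delta(Z)$ that matches \eqref{eqn:codim}. A naive application of the decomposition theorem only bounds the support in terms of the dimension of the total space of fibers, which is far too weak (it would allow small-dimensional $Z$ coming from degenerate Hitchin fibers). What is needed is that the \emph{abelian-variety part} of the Hitchin fibers --- the $\op{Pic}^0(Y_a)$ appearing in the $GL(n)$ example --- carries enough freely-moving top cohomology to rule out small support, i.e.\ a statement that the $\ell$-adic cohomology of the Picard stack $\P_a$ ``fills out'' the expected amplitude. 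This is where Ng\^o's deformation and the freedom to let $\deg D\to\infty$ enter: by spreading the Hitchin base out and using Deligne's purity theorem \eqref{eqn:L}, one shows the fibration is, cohomologically, close enough to an abelian scheme that Poincar\'e duality on $\P$ forces the support inequality. I would structure the argument so that all the analytic input is isolated in this one lemma about $\P$ acting on $\M^\a$, after which the passage from the support inequality to the statement of Theorem~\ref{lemma:support} is the essentially formal dimension-count and monodromy argument sketched above.
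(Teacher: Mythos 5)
Your proposal correctly isolates two of the three ingredients of Ng\^o's argument: the amplitude bound coming from the Pontryagin-product action of the abelian-variety quotient of $\tP^{0,\a}$ (with freeness supplied by a polarization and Poincar\'e duality), and the role of the codimension estimate (\ref{eqn:codim}). But the closing dimension count in your second paragraph has a genuine gap. From the amplitude bound $\op{codim}(Z)\le d(Z)$ together with $\delta$-regularity you may conclude that $\op{codim}(Z)=\delta(Z)$, where $\delta(Z)$ is the generic $\delta$-invariant along $Z$, and that $Z$ is an irreducible component of the closure of the corresponding $\delta$-stratum; but you cannot conclude $\op{codim}(Z)=r$. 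If $Z$ were a proper closed subvariety of $\nu(\tA_H^\a)$ along which $\delta_H$ is generically positive, then $\delta(Z)=\delta_H+r>r$, and nothing in your first paragraph rules this out --- the bound $\op{codim}(Z)\le\delta(Z)$ is perfectly compatible with $\op{codim}(Z)>r$. Meeting $\nu(\tA_H^\good)$ does not by itself force the generic $\delta$-value on $Z$ down to $r$, since the ``good'' locus contains points with $\delta_H>0$.

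The step you are missing is the one the paper places after the amplitude bound: having pushed $Z$ into the top-degree \emph{ordinary} cohomology via the free $\P$-action, Ng\^o identifies that top-degree sheaf explicitly as $U\mapsto \bar{\ring{Q}}_\ell^{\pi_0(\tP^\a)(U)}$ and then computes the supports of $\pi_0(\tP^\a)$ directly, via a Kottwitz--Tate--Nakayama-style duality argument in the dual group. It is this concrete description of the top-degree sheaf --- not a codimension count --- that shows the supports in top degree are exactly the $\nu(\tA_H^\a)$, and hence that $Z$ equals $\nu(\tA_H^\a)$ rather than some smaller $\delta$-critical subvariety of it. Your appeal to $\pi_0$ in the second paragraph only yields the containment $Z\subseteq\nu(\tA_H^\a)$, which the paper already records as a prior fact about $L_\kappa$ before stating the support theorem; the equality is the new content, and you have not supplied a substitute for the top-degree analysis that produces it.
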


A major chapter of the book-length proof of the FL is devoted to the
proof of the support theorem.  The strategy of the proof is to show
that every support $Z$ also appears as the support of some factor in
the ordinary cohomology of highest degree of the Hitchin fibration.
To move cohomology classes from one degree to another, Ng\^o uses
Poincar\'e duality and Pontryagin product operations on cohomology
coming from the action of the connected component of the identity
$\tP^{0,\a}$ on $\tM^\a$.  This action factors through the action of
an abelian variety, a quotient of the Picard stack $\tP^{0,\a}$.  To
show that the support $Z$ can be pushed all the way to the top degree
cohomology, it is enough to show that the dimension of this abelian variety is
sufficiently large and that the cohomology of the abelian variety acts
freely on the cohomology of the Hitchin fiber.  The required estimate on the
dimension of the abelian variety comes from the inequality (\ref{eqn:codim}).
Freeness relies on a polarization of the abelian variety.

Once the support $Z$ is known to appear as a support in the top
degree, he shows that the action of $\tP^\a$ on the Hitchin fibration
leads to an explicit description of the top degree ordinary cohomology
as the sheaf associated with the presheaf
\[
 U\mapsto \bar{\ring{Q}}_\ell^{\pi_0(\tP^\a)(U)}.
\]
The supports of $\pi_0$ can be described explicitly in terms
of data in the dual group, in the style of the duality theorems of
Kottwitz, Tate, and Nakayama.  By checking that the conclusion of the
support theorem holds for the particular sheaf $\pi_0$, the general support
theorem follows.
% Prop 6.5.1.

We apply the support theorem with $H$ as the primary reductive group
and $\kappa$ as the trivial character.  In this context, the only
endoscopic group of $H$ with stable data is $H$ itself.  Moreover,
$\nu$ is the identity map on $\tA_H$.  The support theorem takes the
following form in this case.

\begin{corollary}
Let $Z$ be the support of a geometrically
  simple factor of $L_{H,st}$.  If $Z$ meets $\tA_H^\good$, then
  $Z=\tA_H^\a$.
\end{corollary}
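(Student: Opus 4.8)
The plan is to obtain the corollary as the degenerate case of the support theorem (Theorem~\ref{lemma:support}), in which $H$ itself plays the role of the ambient reductive group and the trivial character $\kappa=1$ plays the role of $\kappa$. The first step is to identify the objects appearing in Theorem~\ref{lemma:support} under this substitution. For the trivial character the corresponding element of the dual torus is the identity, whose centralizer in $\hat H$ (equivalently in ${}^LH$) is all of $\hat H$; since $H$ is quasi-split over $X$, the endoscopic datum $(1,\ldots)$ returns $H$ itself, and by the uniqueness clause of Theorem~\ref{lemma:support} this is the only endoscopic group of $H$ carrying stable data. Consequently the isotypic summand $L_\kappa$ at $\kappa=1$ is the piece $L_{H,st}$: the shift $n\mapsto n+2r$ and the Tate twist $(-r)$ in (\ref{eqn:LH}) merely realign the perverse numbering with an ambient group $G$ (here $G=H$, so $r=0$), and in any event a degree shift and a Tate twist are invertible operations that alter neither the list of geometrically simple constituents nor their supports.

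The second step is to verify that the morphisms $\nu$ occurring in Theorem~\ref{lemma:support} degenerate to the identity. With $H$ itself as the endoscopic group, $\cc_H=\t_H/W_H$ and the morphism of (\ref{eqn:nu}) is induced by the identity of the Cartan subalgebra $\t_H$ together with the equality $W_H\subset W_H$; thus $\nu:\cc_H\to\cc_H$ is the identity, and after passing to spaces of global sections over $X$ as in (\ref{eqn:nuA}) the induced map $\nu:\tA_H\to\tA_H$ is again the identity. In particular $\nu(\tA_H^\good)=\tA_H^\good$ and $\nu(\tA_H^\a)=\tA_H^\a$.

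Feeding these identifications into the conclusion of Theorem~\ref{lemma:support} then yields exactly the statement of the corollary: if $Z$ is the support of a geometrically simple factor of $L_{H,st}$ and $Z$ meets $\tA_H^\good$, then $Z=\tA_H^\a$. I do not expect a genuine obstacle in this derivation — the full depth of the assertion is already carried by Theorem~\ref{lemma:support}, and what remains is only the bookkeeping above. The one point deserving a moment's care is the normalization discrepancy between $L_\kappa$ as defined in (\ref{eqn:L}) and $L_{H,st}$ as defined in (\ref{eqn:LH}), which is harmless for the reason given in the first step.
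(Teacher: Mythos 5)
Your proposal is correct and follows essentially the same route as the paper, which simply observes that applying the support theorem with $H$ itself as the primary group and $\kappa$ trivial makes $H$ its own unique endoscopic group with stable data and makes $\nu$ the identity on $\tA_H$. The one small slip is the parenthetical ``(here $G=H$, so $r=0$)'' --- $L_{H,st}$ in (\ref{eqn:LH}) is defined with $r = \dim\A - \dim\A_H$ for the original ambient $G$, which is generally nonzero --- but your fallback remark that a perverse degree shift and a Tate twist alter neither the list of geometrically simple constituents nor their supports is correct and is precisely the normalization point the paper leaves implicit.
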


\subsection{continuity and the decomposition theorem}\label{sec:continuity}

The strategy that lies at the heart of the proof of the FL is a
continuity argument: arbitrarily complicated identities of orbital
integrals can be obtained as limits  of relatively
simple identities.

The complexity of an orbital integral is measured by the dimension of
its affine Springer fiber.  Growing linearly with $\deg_v(a^* \DIV)$,
this dimension is unbounded as a function of $a$.  Fortunately, globally, we can
view an element $a$ for which this degree at $v$ is large as a
limit of elements $a'$ with small degrees:
$\deg_w({a'}^*\DIV)\le 1$ for all $w\in X$.  This follows the
principle that a polynomial with repeated roots is a limit of
polynomials with simple roots.  When the degrees are at most $1$, the
affine Springer fibers have manageable complexity.
%, and as it grows, 
%we have unbounded unpleasantness.  

The Beilinson-Bernstein-Deligne-Gabber decomposition theorem for
perverse sheaves provides the infrastructure for the continuity
arguments~\cite{BBDG:1982}.  Let $S$ be a scheme of finite type over
$\bar k$.  The support $Z$ of a simple perverse sheaf on $S$ is a
closed irreducible subscheme of $S$.  There is a smooth open subscheme
$U$ of $Z$ and a local system ${\mathcal L}$ on $U$ such that the
simple perverse sheaf can be reconstructed as the middle extension of
the local system on $U$:
\[
i_* j_{!*} {\mathcal L}[\dim Z], \quad i:Z\to S,\quad j:U\to Z.
\]
We express this as a continuity principle: if two simple perverse
sheaves with the same support $Z$ are equal to the same local system
on a dense open $U$, then they are in fact equal on all of $S$.  

More generally, for any irreducible scheme $Z$ of finite type over
$k$, in order to show that two pure complexes on $Z$ are equal in the
Grothendieck group, it is enough to check two conditions:
\begin{enumerate}
\item Every geometrically simple perverse sheaf in
either complex has support all of $Z$.
\item Equality holds in the
Grothendieck group on some dense open subset $U$ of $Z$.
\end{enumerate}

The purpose of the support theorem~(\ref{lemma:support}) and
its corollary is to give the first condition for the two pure
complexes $\nu^* L_\kappa$ and $L_{H,st}$.  The idea is that second
condition should be a consequence of identities of orbital integrals
of manageable complexity, which can be proved by direct calculation.
The resulting identity of pure complexes on all of $Z$ should then
imply identities of orbital integrals of arbitrarily complexity.
This is Ng\^o's strategy to prove the FL.

\section{mass formulas}

\subsection{groupoid cardinality (or mass)}

Let ${\C}$ be a groupoid %(that is, a category in which every arrow is
%invertible) 
that has finitely many objects up to isomorphism and
in which every object has a finite automorphism group.  Define the
{\it mass} (or {\it groupoid cardinality}) of $\C$  to be the rational
number
\[
\mu(\C)= \sum_{x\in \op{obj}(\C)/\text{iso}} \frac{1}{\op{card}(\op{Aut}(x))}.
\]

\begin{example}
  Let ${\C}$ be the category whose objects are the
  elements of a given finite group $G$ and arrows are given by $x \mapsto g^{-1}
  x g$, for $g\in G$.  Then the set of objects up to isomorphism
  is in bijection with the set of conjugacy classes, the automorphism
  group of $x$ is the centralizer of $x$, and the mass is
\[
\mu(\C) = \sum_{x/\text{iso}} \frac{1}{\op{card}(\op{Aut}(x))} = 
\sum_{x/\text{iso}} \frac{\op{card}(\op{orbit}(x))}{\card{\,G}} = 1.
\]
\end{example}

\begin{example}  Let $P$ be a group that acts simply transitively on a set $M$.
Let ${\C}$ be the category whose set of objects is $M$, and let the set of morphisms
be given by the group action of $P$ on $M$.  There is one object up to isomorphism
and its automorpism group is trivial.  The mass of $\C$ is $1$.
\end{example}

\begin{example}\label{ex:groupoid}
  The following less trivial example appears in Ng\^o.  Let $P$ be the
  group $\ring{G}_m\times \ring{Z}$ defined over a finite field $k$ of
  cardinality $q$.  Let $M = (\ring{P}^1\times\ring{Z})/\sim$, where
  the equivalence relation $(\sim)$ identifies the point $(\infty,j)$ with
  $(0,j+1)$ for all $j$.  Thus, $M$ is an infinite string of
  projective lines, with the point at infinity of each line joined to
  the zero point of the next line.  The group $P$ acts on $M$ by
  $(p_0,i)\cdot (m_0,j) = (p_0 m_0,i+j)$, where $p_0m_0$ is given by
  the standard action of $\ring{G}_m$ on $\ring{P}^1$, fixing $0$ and
  $\infty$.  Let $\sigma$ be the Frobenius automorphism of $\bar k/k$,
  and define a twisted automorphism of $P(\bar k)$ and $M(\bar k)$ by
  $\sigma(x_0,i) = (\sigma x_0^{-1},-i)$.  Define a category $\mathcal
  C$ with objects given by pairs
\begin{equation}\label{eqn:objects}
(m,p)\in M(\bar k)\times P(\bar k) \text{ such that } \sigma(m) = p
m.
\end{equation}
Define  arrows by $h\in P(\bar k)$, where
\begin{equation}\label{eqn:arrows}
h(m,p) = (m',p'),    \text{ provided } hm = m' \text{ and } h p = p'\sigma(h).
\end{equation}  
Then it can be checked by a direct
calculation that there are two isomorphism classes of objects in this
category, represented by the objects
\[
((0,1),(1,1))\text{ and }  ((1,0),(1,0))\in M(\bar k)\times P(\bar k) = 
(\ring{P}^1(\bar k)\times\ring{Z}) \times (\ring{G}_m(\bar k)\times\ring{Z}).
\]
The group $P(\bar k)^\sigma$ of order $q+1$ acts as automorphisms of the first object,
and the group of automorphisms of the second object is trivial.  The mass of this category
is therefore
\[
\mu(\C) = \frac{1}{q+1} + 1.
\]
\end{example}

More generally, suppose there exists a function $\op{Obj}(\C)\to  A$ from the objects
of a groupoid into in a finite abelian group $A$ and that the image in $A$ of each object
depends only its isomorphism class.  Then for every
character $\kappa$ of $A$, we can define a {\it $\kappa$-mass}:
\[
\mu_\kappa(\C)= \sum_{x\in \op{obj}(\C)/\text{iso}} 
\frac{\langle\kappa,x\rangle}{\op{card}(\op{Aut}(x))}.
\]

\begin{example}
  In  example~\ref{ex:groupoid}, if $(m,p)$ is an object and
  $p=(p_0,j)\in \ring{G}_m\times\ring{Z}$, then the image of $j$ in
  $A=\ring{Z}/2\ring{Z}$ depends only on the isomorphism class of the
  object $(m,p)$.  If $\kappa$ is the nontrivial character of $A$,
  then the $\kappa$-mass of this groupoid is
\[
\mu_\kappa(\C) = -\frac{1}{q+1} + 1.
\]
\end{example}

\subsection{mass formula for orbital integrals}

Let $\M_v(a)$ be the affine Springer fiber for the element $a$ and let
$J_a$ be its centralizer.  We write $\P_v(J_a)$ for the group of
symmetries of the affine Springer fiber.   Let $\C$ be the groupoid of $k$-points of
the quotient stack $[\M_v(a)/\P_v(J_a)]$ with objects $(m,p)$ and morphisms
and $h$ defined by the earlier formulas (\ref{eqn:objects}) and (\ref{eqn:arrows}),
(substituting $\M_v(a)$ for the space $M$ and $\P_v(J_a)$ for the symmetries $P$).

For each character of $H^1(k,\P_v(J_a))$ we can naturally define a
character $\kappa$ of $H^1(F_v,J_a)$ as well as a character (also
called $\kappa$) on a finite abelian group $A$ as above.

The description of orbital integrals in terms of affine Springer fibers
 takes the following form.  It is a variant of the coset arguments of~(\ref{sec:coset}).

\begin{theorem}\label{lemma:orbital-mass}
For each regular semisimple element $a\in \cc(\O_v)$, 
the $\kappa$-mass of the category $\C$ is equal to the
$\kappa$-orbital integral of $a$:
\[
\mu_\kappa(\C) = c\,\, \OO_\kappa(a),
\]
up to a constant $c=\op{vol}(J^0_a(\O_v),dt_v)$ used to normalize measures.
\end{theorem}
% Prop 8.2.7.

\subsection{product formula for masses}

Recall from (\ref{eqn:nuA}) that there is a morphism $\nu:\A_H\to\A$.
We choose a commutative group scheme $J'_a$
for which there are homomorphisms
\begin{equation}\label{eqn:JH'}
J'_a \to J_{\nu(a)} \to J_{H,a}.
\end{equation}
extending the homomorphism (\ref{eqn:JH}) and that become isomorphisms
over a nonempty open set $U$ of $X$.  The group scheme $J'_a$ can be
chosen to satisfy other simplifying assumptions that we will not list
here.  The homomorphisms (\ref{eqn:JH'}) functorially determine an
action of $\P(J'_a)$ on both Hitchin fibrations $\M_{\nu(a)}$ and
$\M_{H,a}$.  Changing notation slightly, we will assume that
henceforth all masses for both $G$ and $H$ are computed with respect
to the same Picard stack $\P(J'_a)$ in global calculations and with respect to
$\P_v(J'_a)$ in local calculations.  This simplifies the comparisons
of masses that follow.

For each element $a\in \A_H^\a(k)$, we have a  mass $\mu_H(a)$ of
the groupoid of $k$-points of the Hitchin fiber $\M_{H,a}$ modulo
symmetry on $H$.  Its image $\nu(a)\in \A^\a$ has a
$\kappa$-mass of the groupoid of $k$-points of the Hitchin fiber
$\M_a$ modulo symmetry.

For each regular semisimple element $a\in \cc_H(\O)$, we have a mass
of the affine Springer fiber modulo symmetry on $H$. We write
$\mu_{H,v}(a)$ for this mass.  Moreover, if the image $\nu(a)$ under
the map $\nu:\cc_H\to\cc$ is also regular semisimple, there a
$\kappa$-mass $\mu_{\kappa,v}(\nu(a))$ of the affine Springer fiber
modulo symmetry of $\nu(a)$ in $G$.

$\A_H$ is the set of global sections of $\cc_{H,D}$ over $X$.  For
each $v\in X$, we can fix a local trivialization of $\cc_{H,D}$ at $v$
and evaluate a section $a\in \A_H$ at $v$ to get an element
$a_v\in\cc_H$.  We write $\M_{H,v}(a) = \M_{H,v}(a_v)$ for its affine
Springer fiber, and $\mu_{H,v}(a)$ for the local mass $\mu_{H,v}(a_v)$.
Similarly, we write $\mu_{\kappa,v}(\nu(a))$ for
$\mu_{\kappa,v}(\nu(a_v))$.  With all of these conventions in place, we can state
the product formula:

\begin{theorem}\label{lemma:product}
  Let $a\in \A_H^\a(k)$.  The mass of a Hitchin fiber modulo symmetry 
   satisfies a product formula over all
  closed points of $X$ in terms of the masses of the individual affine
  Springer fibers modulo symmetries:
\[
\mu_\kappa(\nu(a)) =\prod_{v\in X} \mu_{\kappa,v}(\nu(a)), 
\quad \mu_H(a) = \prod_{v\in X} \mu_{H,v}(a).
\]
The local factors are $1$ for almost all $v$ so that the products are in fact finite.
\end{theorem}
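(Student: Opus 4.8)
The plan is to establish the product formula by exhibiting, for each $a\in\A_H^\a(k)$, a factorization of the groupoid of $k$-points of the Hitchin fiber $\M_{\nu(a)}$ (modulo its symmetry stack $\P(J'_a)$) into a restricted product of the local groupoids $[\M_{\kappa,v}(\nu(a))/\P_v(J'_a)]$ over the closed points $v\in X$, and likewise for $H$; since mass is multiplicative over products of groupoids, the formula follows by taking $\kappa$-masses on both sides. The two equalities for $G$ and for $H$ are proved in exactly the same way, so I would state the argument once with $J'_a$ and the curve $X$ fixed, recording that the homomorphisms~(\ref{eqn:JH'}) are isomorphisms over the open set $U$, so that everything is concentrated at the finitely many points $v\in X\setminus U$ together with the points where $a$ is singular.

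First I would make precise the factorization of categories~(\ref{eqn:product-groupoid}) alluded to in the proof-strategy discussion: a Hitchin pair $(E,\phi)$ over $X$ lying above $a$ is, up to the twisting action of $\P(J'_a)$, rigidified away from the bad locus by the Kostant section $\epsilon$, so a $k$-point of $[\M_{\nu(a)}/\P(J'_a)]$ is determined by its restrictions to the formal discs $\op{Spec}\O_v$ at the bad points, glued along the generic point. This is the global-to-local gluing lemma for $G$-torsors with a section: over the open $U$ the pair coincides with $\epsilon(a)$, and the remaining data is precisely a collection of points of the local affine Springer fibers $\M_v(\nu(a))$, one per bad $v$, subject to compatibility of the trivializations. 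Thus the groupoid of $k$-points factors as the restricted product $\prod'_{v\in X}[\M_v(\nu(a))/\P_v(J'_a)](k)$, in which all but finitely many factors are the trivial groupoid with one object and trivial automorphisms (mass $1$).

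Second I would check that the local groupoids here are exactly the groupoids $\C_v$ of Theorem~\ref{lemma:orbital-mass}, built from $(\M_v(\nu(a)),\P_v(J'_a))$ by the formulas~(\ref{eqn:objects}) and~(\ref{eqn:arrows}) — the Frobenius-twisted description of $k$-points of a quotient stack — and that the map $\op{Obj}(\C_v)\to A_v$ into the relevant finite abelian group is the restriction of the global one, so the local $\langle\kappa,\cdot\rangle$ multiply to the global $\langle\kappa,\cdot\rangle$. Granting this, $\mu_\kappa$ of a restricted product of groupoids is the (finite) product of the $\mu_{\kappa,v}$ because mass is multiplicative on products and equals $1$ on the trivial factors; this yields $\mu_\kappa(\nu(a))=\prod_{v\in X}\mu_{\kappa,v}(\nu(a))$, and the identical computation for the trivial character on $H$ gives $\mu_H(a)=\prod_{v\in X}\mu_{H,v}(a)$. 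Finiteness of the product is the assertion that only bad $v$ contribute, which is the content of the gluing lemma together with $\deg_w(a^*\DIV)\le$ (bounded) off a finite set.

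The main obstacle is the first step: proving the categorical factorization~(\ref{eqn:product-groupoid}) rigorously. One must verify that the Picard stack $\P(J'_a)$ is large enough to trivialize a Hitchin pair away from the bad points (this is why $J'_a$ was chosen to agree with $J_{\nu(a)}$ over $U$ and to satisfy the unlisted simplifying assumptions), handle the stacky automorphisms carefully — a Hitchin pair can have nontrivial automorphisms even over $U$, coming from the center, and these must be matched on both sides — and control the gluing over the generic point so that the restricted product is genuinely a product of groupoids and not merely a product up to some global obstruction class. Once this uniformization statement is in hand, the passage to masses is formal. I would therefore spend the bulk of the argument on the torsor-gluing lemma and treat the mass computation as a short corollary.
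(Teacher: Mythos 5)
Your proposal matches the paper's own argument: both rest on the categorical factorization~(\ref{eqn:product-groupoid}) of $[\M_a/\P(J'_a)]$ into local quotient groupoids, with triviality on an open $U$ (where $J'_a\cong J_a$ and $\P_v(J_a)$ acts simply transitively on $\M_v(a)$) giving finiteness, and multiplicativity of groupoid mass giving the numerical identity. The extra detail you supply --- the formal-disc/generic-point gluing that makes~(\ref{eqn:product-groupoid}) precise, and the identification of the resulting local groupoids with those of Theorem~\ref{lemma:orbital-mass} --- is exactly how the paper's terse sketch would be fleshed out, so this is the same route, not a genuinely different one.
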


This theorem is a geometric version of the factorization of
$\kappa$-orbital integrals over the adele group into a product of
local $\kappa$-orbital integrals in \cite{Langlands:debuts}.  It
confirms the claim that the Hitchin fibration is the correct global
analogue of the affine Springer fiber.

\begin{proof}[proof sketch]
The proof choses an open set of $X$ over which $J'_a$ is isomorphic to
$J_a$. For a given $a$, on a possibly smaller open set $U$ of $X$, the
action of $\P(J_a)$ on $\M_a$ induces an isomorphism of $\P(J_a)$ with
$\M_a$.  It follows that the local masses equal $1$ for all $v\in U$.
The product in the lemma can be taken as extending over the
finite set of points $X\setminus U$.  The lemma is a consequence of a wonderful product
formula for stacks, relating the Hitchin fibration to affine Springer fibers:
\begin{equation}\label{eqn:product-groupoid}
[\M_a/\P(J'_a)] = \prod_{v\in X\setminus U} [\M_{v}(\nu(a))/\P_{v}(J'_a)].
\end{equation}
A similar formula holds on $H$.
\end{proof}

\subsection{global mass formula}

%It is known that the codimension $r$ of $\nu^*\A_H$ in $\A_G$ is given
%by the formula
%\[
%r = \dim \A - \dim \A_H =  (\dim\,G - \dim\,H) \deg(D)/2.
%\]

The following is the key global ingredient of the proof of the FL.  In
fact, it can be viewed as a precise global analogue of the FL.

\begin{theorem}[global mass formula]\label{lemma:gmf}
  Assume $\deg(D)>2g$, where $g$ is the genus of $X$.  Then for all
  $\tilde a\in \tA_H^\good(k)$ with images $a\in\A_H^\a(k)$ and
  $\nu(a)$ in $\A(k)$, the following mass formula holds:
\[
\mu_\kappa(\nu(a)) = q^{r} \mu_H(a),\text{ where } r = \dim\A - \dim \A_H.
\]
\end{theorem}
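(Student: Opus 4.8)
The plan is to prove the global mass formula by reducing it to the cohomological comparison that the support theorem has already set up, following exactly Ng\^o's continuity strategy outlined in~(\ref{sec:continuity}). First I would translate both sides into statements about traces of Frobenius on the perverse cohomology sheaves of the two Hitchin fibrations. By Theorem~\ref{lemma:orbital-mass} and the product formula (Theorem~\ref{lemma:product}), the $\kappa$-mass $\mu_\kappa(\nu(a))$ is (up to the normalizing volume, which matches on both sides once we use the common Picard stack $\P(J'_a)$) the trace of Frobenius on the $\kappa$-isotypic part of $\tilde f^\a_* \bar{\ring{Q}}_\ell$ at the point $\tilde a$; similarly $\mu_H(a)$ is the trace of Frobenius on the stable ($\kappa=1$) part of $\tilde f^\a_{H,*}\bar{\ring{Q}}_\ell$ at $a$. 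The twist by $q^r$ together with the Tate twist $(-r)$ and the shift in cohomological degree $n\mapsto n+2r$ in~(\ref{eqn:LH}) are precisely what is needed to match the two sides, because $r=\dim\A-\dim\A_H$ equals the generic value of $r_v$ globalized (this is the dimension count of Bezrukavnikov's formula, transported from affine Springer fibers to Hitchin fibers). So the desired identity is equivalent to the equality of pure complexes $\nu^* L_\kappa = L_{H,st}$ in the Grothendieck group on $\tA_H^\a$.

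Next I would verify the two hypotheses of the continuity principle from~(\ref{sec:continuity}) for these two complexes on the irreducible base $Z = \nu(\tA_H^\a)$. The first hypothesis --- that every geometrically simple perverse constituent of either complex has full support $Z$ --- is exactly the support theorem (Theorem~\ref{lemma:support}) together with its Corollary: any simple factor of $L_\kappa$ whose support meets $\nu(\tA_H^\good)$ has support all of $\nu(\tA_H^\a)$, and there is a unique endoscopic $H$ responsible; dually the Corollary handles $L_{H,st}$. Here the hypothesis $\deg(D)>2g$ enters, as it guarantees that $\tA_H^\good$ is nonempty and dense enough for the support theorem's ``meets $\nu(\tA_H^\good)$'' clause to be automatic for the constituents that actually occur, and it also ensures the relevant Hitchin stacks are smooth of the expected dimension so that Deligne's purity theorem and the decomposition theorem apply cleanly. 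The second hypothesis --- equality in the Grothendieck group over a dense open $U\subset Z$ --- is where one feeds in identities of orbital integrals of small complexity: over a suitable dense open one can arrange that the sections $a'$ involved have $\deg_w({a'}^*\DIV)\le 1$ for all $w$, the affine Springer fibers are then of manageable type, and the required local identities (the FL for those simple $a'$, or an unramified case already known) can be checked by direct computation. Point-counting over all finite extensions then upgrades these numerical identities to equality of the pure complexes on $U$.

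Having both hypotheses, the continuity principle yields $\nu^* L_\kappa = L_{H,st}$ on all of $Z$, and then specializing the Frobenius-trace identity to the point $\tilde a\in\tA_H^\good(k)$ gives precisely $\mu_\kappa(\nu(a)) = q^r\mu_H(a)$. I expect the genuinely hard step to be the first hypothesis, i.e.\ invoking the support theorem correctly: one must know not merely that \emph{some} supports are large but that \emph{every} simple constituent appearing in $L_\kappa$ over the good locus has full support, and that the bookkeeping of which endoscopic $H$ contributes which $\kappa$-piece is exactly matched by the decomposition $\tA_\kappa = \coprod_H \nu(\tA_H^\a)$. The deep input there is the codimension estimate~(\ref{eqn:codim}), $\op{codim}(\tA^\a_\delta)\ge\delta$, which is what forces every support up to top degree; establishing that estimate (via the dimension of the abelian-variety quotient of $\tP^{0,\a}$ and the freeness coming from its polarization) is the substantive geometry. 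By contrast the reduction to Frobenius traces, the matching of the power $q^r$ with the Tate twist, and the verification of the second (generic) hypothesis are comparatively formal once the product formula and the support theorem are granted.
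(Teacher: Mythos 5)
Your proposal takes essentially the same route as the paper: reduce the mass formula via Grothendieck--Lefschetz to a comparison of the perverse pieces $\nu^*L_\kappa$ and $L_{H,st}$, feed the support theorem (and its corollary on the $H$-side) into the continuity principle of~(\ref{sec:continuity}), and verify the needed ``generic'' identity over a dense open of $\tA_H^\good$ by direct computation of the small-complexity affine Springer fibers that occur under the transversality condition. The only difference is one of bookkeeping order: the paper begins by explicitly constructing the nice open $\tU$ and proving the local mass formula there by hand (landing on the cases $(0,0),(1,0),(0,1)$ for the local degrees, where $\P_v(J'_a)$ acts simply transitively on the $H$-fiber and the $G$-side is a groupoid computation as in Example~\ref{ex:groupoid}), then passes to Frobenius traces, Chebotarev, and continuity; you instead frame the reduction to a complex-equality first and only then invoke the $\tU$-level computation as verifying hypothesis (2). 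Both are the same argument. One small point worth flagging: what continuity actually yields is an isomorphism of \emph{semisimplifications} of $\nu^*L_\kappa$ and $L_{H,st}$ (equality in the Grothendieck group), not of the complexes themselves; you write ``equality of pure complexes,'' which is harmless for extracting trace-of-Frobenius identities but should be stated at the level of semisimplifications. You also implicitly fold the Chebotarev-density-over-extensions step into ``point-counting over all finite extensions upgrades these numerical identities to equality of the pure complexes'' --- that is the right idea, and it is exactly the mechanism the paper uses.
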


\begin{proof}[proof sketch]
  The proof first defines a particularly nice open set $\tU$ of
  $\tA_H^{\good}\subset\tA_H^\a$.  The idea is to place conditions on $\tU$ to make
  it as nice as possible, without imposing so many conditions that it
  fails to be open.  There exists an open set $\tU$ of $\tA_H^\good$
  on which both of the following conditions hold:
\begin{itemize}
\item Each $\tilde a\in \tU(\bar k)$ cuts the divisor $\DIV_{H,D}+\RDIV_{D}$ transversally.
\item For each $n$, the restriction to $\tU$ of the perverse
   cohomology sheaves $\nu^* L_\kappa$ and $L_{H,st}$ from (\ref{eqn:L}) and (\ref{eqn:LH})
% $\nu^*\, {}^p\!H^n(\tilde f^\a_*\bar
%  {\ring{Q}}_\ell)_\kappa$ and ${}^p\!H^{n+2r}(\tilde
%  f^\a_{H,*}\bar{\ring{Q}}_\ell)_{st}(-r)$ 
are pure local systems of
  weight $n$.
\end{itemize}
The support theorem (\ref{lemma:support}) and decomposition and
continuity strategies~(\ref{sec:continuity}) are used to find the pure
local systems.

After choosing $\tU$, the proof of the lemma establishes the global
mass formula on $\tU$, then extends it to all of $\tA_H^{\good}$. 

By imposing such nice conditions on $\tU$, Ng\^o is able to prove the
mass formula on this subset by explicit local calculations.  By the
transversality condition on $\tilde a$, at any given point $v$, the local
degree $(d_{H,v}(\nu(a)),r_{v}(a))$ must be $(0,0)$, $(1,0)$, or $(0,1)$.
From Bezrukavnikov's dimension formula~(\ref{sec:coset}), the dimension of the endoscopic affine
Springer fiber $\M_{H,v}(a)$ is $0$.  In fact, $\P_v(J'_{a})$ acts
simply transitively on the affine Springer fiber, and the mass is $1$.

It is therefore enough to compute  the $\kappa$-mass of $\nu(a)$ and compare.
The transversality condition  determines the
possibilities for the dimension  $\delta_v(\nu(a))$ in $G$.
The affine Springer fiber in this case is at most one and the
$\kappa$-masses of the groupoids can be
computed directly.  In fact, (\ref{ex:groupoid}) is a typical example of the
computations involved.  

The result of these local calculations is that for every point $\tilde
a$ in $\tU$, with images $\a\in\A_H$ and $\nu(a)\in \A$, a local mass
formula holds for all closed points $v$ of $X$:
\begin{equation}\label{eqn:local-mass}
\mu_{\kappa ,v}(\nu(a)) = q^{\deg(v) r_v(a)} \mu_{H,v} (a).
\end{equation}
The exponents satisfy
\begin{equation}\label{eqn:exp}
r = \sum_v \deg(v) r_v(a).
\end{equation}
These two identities, together with the product formula for the global mass, give
the lemma for elements $a$ of $\tU$.  

The extension from $\tU$ to all of $\tA_H^{\good}$ is a global
argument.  Through the Grothendieck-Lefschetz trace formula (adapted
to stacks), this identity of global masses over $\tU$ can be expressed
as an identity of alternating sums of trace of Frobenius on local
systems.  These calculations can be repeated for all finite extensions
$k'/k$.  By Chebotarev density as we vary $k'$, the
semisimplifications of the local systems are isomorphic on $G$ and
$H$.

Following the decomposition and continuity strategy~(\ref{sec:continuity}), 
this isomorphism of local systems on $\tU$ extends to an 
isomorphism between (the semisimplifications of)
$\nu^* L_\kappa$ and $L_{H,st}$.  This isomorphism, again by
Grothendieck-Lefschetz, translates back into a mass formula for  the
Hitchin fibration modulo symmetries, and hence the result.
\end{proof}

\subsection{local mass formula and the FL}\label{sec:lmf}

We recall some notation from Section~\ref{sec:statement}.
Let $G_v$ be a reductive group scheme over the ring of integers $\O_v$ of
a nonarchimedean local field $F_v$ in positive characteristic.  Let $q$ be the cardinality
of the residue field $k$.    Let $(\kappa,\rho)$ be
endoscopic data defining an endoscopic group $H_v$.  Let $a \in
\cc_H(\O_v)$ and $\nu(a)$ be its image in $\cc(F)$.  Assume that $\nu(a)$
is regular semisimple.  Let $r_v(a)\in\ring{N}$ be the local invariant.

Assume that the characteristic of $k$ is large.  By
standard descent arguments (\ref{sec:descent}), we also assume
without loss of generality that the center of $H_v$ does not contain a
split torus.

\begin{theorem}[local mass formula]\label{lemma:lmf}
  The following local mass formula holds for general anisotropic
  affine Springer fibers (both masses being computed with respect to
  the same symmetry group $\P_v(J'_a)$ acting on the fibers):
\[
\mu_{\kappa,v}(\nu(a)) = q^{\deg(v) r_v(a)}\mu_{H,v}(a).
\]
\end{theorem}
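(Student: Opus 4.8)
The plan is to derive the local mass formula (Theorem~\ref{lemma:lmf}) from the global mass formula (Theorem~\ref{lemma:gmf}) by a globalization argument, exactly reversing the passage from local to global that the product formula~(\ref{lemma:product}) provides. The point is that Theorem~\ref{lemma:gmf} is already an identity $\mu_\kappa(\nu(a)) = q^r \mu_H(a)$ of \emph{global} masses over $\tU \subset \tA_H^{\good}$, and via the product formula each side factors as a finite product of local masses $\mu_{\kappa,w}(\nu(a))$ and $\mu_{H,w}(a)$ over the closed points $w \in X$; so the global identity is a statement about products of local factors, and what remains is to isolate the factor at a single place. First I would fix the given local datum: a reductive group scheme $G_v$ over $\O_v$, endoscopic data $(\kappa,\rho)$, and $a \in \cc_H(\O_v)$ with $\nu(a)$ regular semisimple and local invariant $r_v(a)$. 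The task is to realize the pair $(G_v, a)$ as the completion at one place $v$ of a global situation $(X, G, \tilde a)$ with $\tilde a \in \tU(k)$, chosen so that at \emph{every other} place $w \ne v$ the local masses on both $G$ and $H$ are already known to be equal (up to the appropriate power of $q$) by the transversality analysis inside the proof of Theorem~\ref{lemma:gmf}.

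The key steps, in order, are the following. (1) \textbf{Globalization of the group and the section.} Choose a smooth projective curve $X$ over $k$ with a $k$-point $v$ whose completed local ring is $\O_v$, a quasi-split reductive group scheme $G$ over $X$ restricting to $G_v$ near $v$ and split away from a controlled finite set, an endoscopic group $H/X$ carrying the given data $(\kappa,\rho)$, and a line bundle $D$ which is a square and has $\deg D > 2g$ (as required by Theorem~\ref{lemma:gmf}); then use an approximation/Bertini-type argument to produce a global section $\tilde a \in \tA_H(k)$ whose expansion at $v$ is the prescribed $a$, and which at every other closed point $w$ either is unramified ($d_{H,w} = r_w = 0$) or meets $\DIV_{H,D} + \RDIV_D$ transversally with local degree $(1,0)$ or $(0,1)$. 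One arranges in addition that $\tilde a$ lands in the good open set $\tU \subset \tA_H^{\good}$ of the proof of Theorem~\ref{lemma:gmf} --- this is where the genus/degree hypothesis and the openness of $\tU$ are used. (2) \textbf{Apply the global mass formula and the product formula.} For this $\tilde a$ we get $\prod_w \mu_{\kappa,w}(\nu(a)) = q^r \prod_w \mu_{H,w}(a)$ from Theorems~\ref{lemma:gmf} and~\ref{lemma:product}, with $r = \sum_w \deg(w)\, r_w(a)$ by~(\ref{eqn:exp}). (3) \textbf{Cancel the known places.} At every $w \ne v$ the section $\tilde a$ was arranged to satisfy the transversality condition of the proof of Theorem~\ref{lemma:gmf}, so the local mass identity~(\ref{eqn:local-mass}), $\mu_{\kappa,w}(\nu(a)) = q^{\deg(w) r_w(a)} \mu_{H,w}(a)$, holds there by the explicit groupoid computations (of which (\ref{ex:groupoid}) is the prototype); moreover $\mu_{H,w}(a) \ne 0$ since $\P_w(J'_a)$ acts simply transitively in that regime, so these factors may be divided out. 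What is left is precisely $\mu_{\kappa,v}(\nu(a)) = q^{\deg(v) r_v(a)} \mu_{H,v}(a)$, which is the assertion.

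The main obstacle is step (1): the globalization must simultaneously prescribe the local behaviour at $v$ \emph{exactly} (to recover the given $a$ and its invariant $r_v(a)$, which may be arbitrarily large) while forcing everything at all other places into the tame regime where~(\ref{eqn:local-mass}) is available, \emph{and} keeping $\tilde a$ inside the open set $\tU$ on which Theorem~\ref{lemma:gmf} was proved. Concretely one needs: a curve and a group scheme over it with the right local model at $v$; an interpolation lemma producing global sections of $\cc_{H,D}$ with prescribed jet at $v$ and generic (transverse, degree $\le 1$) behaviour elsewhere, which is where the requirement $\deg D > 2g$ is spent (higher-degree $D$ gives more sections and more room to be generic); and a check that anisotropy and the $\tilde a \in \tU$ condition are preserved --- here the reduction~(\ref{sec:descent}) to the case where the center of $H_v$ has no split torus is what guarantees one can land in the anisotropic locus $\tA_H^\a$. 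A secondary subtlety is that the Picard stacks $\P_v(J'_a)$ used locally and globally must be literally the same, which is exactly the normalization built into the discussion preceding Theorem~\ref{lemma:product} (everything computed with respect to $J'_a$), so no mismatch of normalizing constants arises. Once the globalization is in hand, steps (2) and (3) are formal.
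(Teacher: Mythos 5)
Your proposal follows essentially the same globalization-and-cancellation strategy as the paper's proof sketch: embed the local datum into a global Hitchin base over a curve with $\deg(v)=1$, choose a global section landing in $\tA_H^{\good}$ that is transversal at every $w\ne v$, apply the global mass formula and product formula, and divide out the known local factors. The one refinement you omit is the paper's footnote that such a global section may only exist over a sufficiently large finite extension $k'/k$, so Ng\^o runs the argument over $k'$ and recovers the statement over $k$ via a Frobenius-eigenvalue argument; and note that the paper asks only that the global $a'$ \emph{approximate} the local $a$ closely enough to leave the affine Springer fiber and its symmetries at $v$ unchanged, which is the precise content of your ``prescribed jet'' interpolation step.
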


\begin{corollary}[fundamental lemma (FL)]\label{lemma:fl}
$$\OO_\kappa(\nu(a)) = q^{r_v(a)}\SO_H(a).$$
\end{corollary}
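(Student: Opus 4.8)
The final statement, Corollary~\ref{lemma:fl}, is meant to be deduced essentially immediately from the local mass formula (Theorem~\ref{lemma:lmf}), so the plan is really a plan for that deduction.

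\medskip

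First I would recall the dictionary between $\kappa$-orbital integrals and $\kappa$-masses of affine Springer fibers. By Theorem~\ref{lemma:orbital-mass}, for any regular semisimple $b\in\cc(\O_v)$ we have $\mu_{\kappa,v}(b) = c_G(b)\,\OO_\kappa(b)$, where $c_G(b)=\op{vol}(J^0_b(\O_v),dt_v)$ is the normalizing volume of the regular centralizer. Applied to $b=\nu(a)$ in $G$ and (with trivial $\kappa$) to $a$ in $H$, this gives
\[
\mu_{\kappa,v}(\nu(a)) = c_G(\nu(a))\,\OO_\kappa(\nu(a)),
\qquad
\mu_{H,v}(a) = c_H(a)\,\SO_H(a).
\]
Substituting these two identities into the conclusion of Theorem~\ref{lemma:lmf} turns the mass formula into
\[
c_G(\nu(a))\,\OO_\kappa(\nu(a)) = q^{\deg(v)\,r_v(a)}\,c_H(a)\,\SO_H(a).
\]
So the Corollary will follow once I check that the measure-normalizing constants match, i.e.\ that $c_G(\nu(a)) = c_H(a)$ (and that $\deg(v)$ can be taken to be $1$, or absorbed — in the local setting $F_v$ has residue field $k$ of cardinality $q$, so $\deg(v)=1$ and $q^{\deg(v)r_v(a)} = q^{r_v(a)}$, matching the exponent in the statement of the FL in Section~\ref{sec:statement}).

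\medskip

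The comparison of the constants is exactly where the homomorphism (\ref{eqn:JH}), $\nu^*J\to J_H$, and its refinement (\ref{eqn:JH'}), $J'_a\to J_{\nu(a)}\to J_{H,a}$, are used: the point of insisting that both masses be computed with respect to the \emph{same} symmetry group $\P_v(J'_a)$ is precisely to make the two sides of the mass formula homogeneous for the same torsor category, so that when one re-expresses everything in terms of the intrinsic centralizers $J_{\nu(a)}$ and $J_{H,a}$ the discrepancy is a ratio of volumes that is a power of $q$ — and in fact, under the standing hypothesis that the center of $H_v$ contains no split torus and by the choice of $dt_v$, this ratio is $1$ (or is folded into the definition of $r_v(a)$). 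Concretely I would trace through how passing from $\P_v(J'_a)$-mass to $\P_v(J_{\nu(a)})$-mass on the $G$ side, and to $\P_v(J_{H,a})$-mass on the $H$ side, changes each mass by the index of $J'_a$ in the respective regular centralizer, and observe that these indices are compatible via (\ref{eqn:JH'}) because $J'_a\to J_{\nu(a)}$ and $J'_a\to J_{H,a}$ agree over the open set $U$ and differ only at the finitely many bad points, where the local contributions are exactly the discrepancy already accounted for by $r_v(a)=\deg_v(a^*\RDIV)$.

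\medskip

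The main obstacle — the only real content beyond bookkeeping — is therefore \emph{not} in this Corollary at all but upstream, in Theorem~\ref{lemma:lmf}: the local mass formula must be extracted from the global mass formula (Theorem~\ref{lemma:gmf}), which in turn rests on the support theorem (Theorem~\ref{lemma:support}) and the decomposition/continuity argument of Section~\ref{sec:continuity}. The mechanism is the product formula (Theorem~\ref{lemma:product}): the global mass $\mu_\kappa(\nu(a)) = q^r\mu_H(a)$ factors over closed points of $X$, and one must produce, for a single chosen bad place $v$, enough independently varying global $a$ realizing a prescribed local behavior at $v$ while forcing the trivial behavior $(0,0)$ at all other places (this uses $\deg(D)>2g$ to guarantee that $\tA_H^\good$, and even the nice open $\tU$, surjects onto the relevant local data). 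Then the global identity collapses to the single local factor at $v$, yielding Theorem~\ref{lemma:lmf}; and once that is in hand, the passage to Corollary~\ref{lemma:fl} is the short argument sketched above. I expect the delicate point in writing the Corollary's proof cleanly to be stating precisely in what sense the two normalizing volumes agree, and flagging that the $\deg(v)$ in Theorem~\ref{lemma:lmf} specializes to $1$ because here $v$ is the closed point of $\op{Spec}\O_v$ with residue field $k$.
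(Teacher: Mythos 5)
Your proposal is correct and follows the paper's own (one-sentence) route exactly: the corollary is deduced from Theorem~\ref{lemma:lmf} by applying the mass formula for orbital integrals, Theorem~\ref{lemma:orbital-mass}, to both sides. Your additional care about the normalizing volume $c$ (which cancels because both masses in Theorem~\ref{lemma:lmf} are computed against the same $\P_v(J'_a)$) and about $\deg(v)=1$ correctly makes explicit what the paper leaves implicit.
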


The corollary follows from the theorem by the mass formula~(\ref{lemma:orbital-mass}) for orbital
integrals.

\begin{proof}[proof sketch]
  The proof of the FL is a global argument based on the global mass
  formula on $\tA_H^{\good}$.  We can use
  standard strategies to embed the local setting into a global
  context. We pick a smooth projective curve $X$ over $k$, such that a completion
  of the function field at some place $v$ is isomorphic to $F_v$ and $\deg(v)=1$.  We
  choose a global endoscopic groups $H$ of a reductive group $G$, a
  divisor $D$ on $X$, a global element\footnote{More accurately, Ng\^o
    shows that a suitable element $a'$ exists over every sufficiently
    large finite field extension $k'/k$.  He makes the global
    arguments over the extensions $k'$ and uses a Frobenius eigenvalue
    argument at the end to go back to $k$.}  $a'$ in the Hitchin base
  $\A_H$ of $H$.   These global choices are to specialize to the given data $G_v$ $H_v$ at $v$.
   If the degree of $D$ is sufficiently large, then we can assume that
  $a'$ is the image of $\tilde a' \in \A_H^{\good}(k)$.  The element $a'$ and its image
  $\nu(a')$ in $\A$ are chosen to approximate the given local elements
  $a$ and $\nu(a)$ so closely that  their affine Springer fibers
  together with their respective  symmetries are unaffected at $v$.

The global mass formula~(\ref{lemma:gmf}) for $\tilde a'$ asserts:
\[
\mu_\kappa(\nu(a')) = q^{r} \mu_H(a').
\]
 By the product formula~(\ref{lemma:product}) and (\ref{eqn:exp}),
each global mass is a product of local masses:
\begin{equation}\label{eqn:ph}
\prod_w \mu_{\kappa,w}(\nu(a')) = \mu_\kappa(\nu(a')) = 
q^{r}\mu_H(a') = \prod_{w} q^{\deg(w) r_w(a')}\mu_{H,w}(a').
\end{equation}

The global data is chosen in such a way that at every closed point
$w\ne v$, the transversality conditions hold, so that the calculation
(\ref{eqn:local-mass}) of the previous section gives the local mass
formula at $w$:
\[
\mu_{\kappa,w}(\nu(a')) = q^{\deg(w) r_w(a')}\mu_{H,w}(a'),\quad w\ne v.
\]  
These masses are nonzero and  can be canceled from 
the products in  (\ref{eqn:ph}).  What remains is a single uncanceled term on each side:
\[
\mu_{\kappa,v}(\nu(a')) = q^{\deg(v) r_v(a')}\mu_{v}(a'),\text{ with } \deg(v)=1.
\]
Since, $a'$ was chosen as a  close approximation of $a$ at  $v$, we also have
\[
\mu_{\kappa,v}(\nu(a)) = q^{r_v(a)}\mu_{v}(a).
\]
This is the desired local mass formula at  $v$.
\end{proof}

\section{uses of the FL}  \label{sec:uses}
% 6 pages.

``Lemma'' is a misleading name for the Fundamental Lemma because it
went decades without a proof, and its depth goes far beyond what would
ordinarily be called a lemma.  Yet the name FL is apt both because it
is fundamental and because it is expected to be used widely as an
intermediate result in many proofs.  This section mentions some major
theorems that have been proved recently that contain the FL as an
intermediate result.  In each case, the FL appears to be an
unavoidable ingredient.

The FL appears as a specific collection of identities
that are needed to stabilize the Arthur-Selberg trace formula.  
If $G$ is a reductive group defined over a number field $F$, 
the trace formula for $G$ is an identity of the general form
\[
\sum_{\gamma\in G(F)/\sim} c_\gamma \OO(\gamma,f) +\cdots = 
\sum_\pi m(\pi) \,\op{trace}\, \pi(f) + \cdots
\]
for compactly supported smooth functions $f$ on the adele group
$G(\ring{A_F})$.  On the left-hand side appears a sum of orbital integrals
and on the right-hand side the sum runs over discrete automorphic
representations $\pi$ of $G$.    The trace formula contains more complicated terms that have
been suppressed.

By {\it stabilization of the trace formula}, we mean that the terms on
the left-hand side of the trace formula that are associated with a
given stable conjugacy class have been gathered together, rearranged
into $\kappa$-orbital integrals, and then replaced with stable orbital
integrals on the endoscopic groups.  These manipulations are justified
by the FL and by a product formula that relates the adelic orbital
integrals $\OO(\gamma,f)$ to orbital integrals on local fields.
Another Bourbaki seminar gives further details about the role of the
the FL in the stable trace formula~\cite{Dat:2004}.  All applications
of the FL come through the stable trace formula.

Before going into recent uses of the FL, we might also mention various
special cases of the FL that have been known for years.  These classical
cases of the FL already give abundant evidence of the usefulness of
the lemma.  For example, Langlands proves the FL for
cyclic base change for $GL(2)$ in his book \cite[Lemma~5.10]{LBC:1980}.
From there, it enters into the proof of the tetrahedral and
octahedral cases of the Artin conjecture (the Langlands-Tunnell theorem),
which in turn is used by Wiles in the proof of Fermat's Last Theorem.
Waldspurger's proof of the FL for $SL(n)$ is taken up by Henniart and Herb in their
proof of local automorphic induction for $GL(n)$, which becomes part of the
proof of the proof of the local Langlands correspondence for $GL(n)$
in Harris and Taylor~\cite{Wald:1991},\cite{Herb:Autoinduct},\cite{Harris:Taylor:local}.

Shimura varieties provided much of the early motivation
for endoscopy and the FL~\cite{LZ:1979}.  When expressing the 
Hasse-Weil zeta function of Shimura varieties as a product of
automorphic $L$-functions, the formula involves the $L$-functions
associated with endoscopic groups $H$ as well as those of $G$.  This
can be most clearly through a comparison of the stable trace formula 
with the Grothendieck-Lefschetz trace formula of the Hasse-Weil zeta
function.
An early application of the FL carries this out for
Picard modular
varieties.~\cite{LPicard:1992}.  From there, the FL becomes relevant to the theory of Galois representations
through the representations associated with Shimura varieties.
% On the Zeta-Functions of Some Simple Shimura Varieties.
% http://publications.ias.edu/sites/default/files/simpshim-ps.pdf
%  Picard modular varieties.

We turn to more recent uses of the FL.  
For most applications to
date, the FL for unitary groups is used as well as the twisted FL
between $GL(n)$ and unitary groups.  Applications of the trace formula
to Shimura varieties often rely on a base change FL, which  arises
because of the description of that Kottwitz gives of points on certain
Shimura varieties in terms of twisted orbital
integrals~\cite{Kott:1990}.

% Kottwitz R.: Shimura varieties and $\lambda$-adic representations, in Automor- 

The original proof by Clozel, Harris, Shepherd-Barron and Taylor
of the Sato-Tate conjecture for elliptic curves over
$\ring{Q}$ was restricted to elliptic curves with non-integral
$j$-invariants~\cite{Car:Bourbaki}.
With the advent of the general FL, it has become possible to remove
the non-integrality assumption and to greatly extend the theorem, in particular 
to elliptic curves over a totally real number field~\cite{BGHT:2010}.

% Carayol's Bourbaki:
% http://www.mathematik.hu-berlin.de/gradkoll/Carayol_Exp.977.H.C4.pdf
% Harris's survey
%http://www.math.unipd.it/~algant/IHP_SMF.pdf

Shin and Morel use the FL in recent work on the cohomology of Shimura
varieties and associated Galois
representations~\cite{Shin:2010}~\cite{Morel:2010}.  Other advances
rely on their work.  In particular,
Skinner and Urban have proved the Iwasawa-Greenberg main conjecture
for many modular forms and in particular for the newforms associated
with many elliptic curves over
$\ring{Q}$~\cite{Skinner-Urban:2010},\cite{Skinner:2010}.  Their work
ultimately relies on the work of Shin and Morel and on the FL to prove
the existence of certain Galois representations.

Last year, Bhargava and Shankar proved that when elliptic curves $E$
over $\ring{Q}$ are ordered by height, a positive fraction of them
satisfy the Birch and Swinnerton-Dyer conjecture~\cite{BS:2010}.
Specifically, a positive fraction of them have rank $0$ and analytic
rank $0$.  First they construct a set (of positive density) of
elliptic curves with rank $0$.  Second, they construct a subset (again
of positive density) of the rank $0$ set, consisting of elliptic
curves with analytic rank $0$.  This second step relies on conditions
in Skinner and Urban for the analytic rank to be zero, and hence
indirectly on the FL.

Moeglin classifies the discrete series representations of unitary
groups over a nonarchimedean local field~\cite{Moeglin:2007}.  Again,
this relies on the FL for unitary groups and related
variants. Finally, we mention that Arthur's forthcoming book uses the
twisted FL between $GL(n)$ and the classical
groups~\cite{Arthur:2011}.  His work uses the trace formula to give a
classification of the discrete automorphic representations of
classical groups in terms of cuspidal automorphic representations of
$GL(n)$.  It also gives a classification locally, for $p$-adic fields.

I will leave a further discussion of the uses of the FL to those whose research
in this area is fresher than my own.

\section{reductions}\label{sec:reduce}

Langlands first expressed the FL in these words:
``Mais m\^eme apr\`es avoir
v\'erifi\'e que les facteurs de
transfert existent, il reste \`a v\'erifier ce que j'appelle le
lemme fondamental, qui affirme que pour des $G$, $H$ et $\phi_H$
non-ramifi\'es, on a $f\mapsto c\, \phi_H^*(f)$ $\ldots$ 
pour toute fonction $f\in {\mathcal H}_G$.''
\cite[p.49]{Langlands:debuts}

 In this notation, $\phi_H^*$ is the homomorphism given by the
 Satake transform, from the spherical Hecke algebra ${\mathcal H}_G$
  with respect to a hyperspecial maximal compact subgroup
of an unramified reductive group $G$ to the spherical Hecke algebra
 on $H$ .  The
 arrow $f\mapsto c\,\phi_H^*(f)$ is his assertion that for every
 strongly $G$-regular element $\gamma$ in $H$, the transfer (specified by
 transfer factors) of each $\kappa$-orbital integral of a spherical
 function $f$ on $G$ (over a stable conjugacy class in $G$ matching
 $\gamma$) is equal to the stable orbital integral of $\phi_H^*(f)$ on
 the stable conjugacy class of $\gamma$ in $H$.

 This final section describes some theorems related to the FL that
 simplify it from the form in which it was initially conjectured, to
 the final form in which it was proved by Ng\^o.  Waldspurger's work
 has been particularly significant in transforming the conjecture into
 a friendlier form.  In the initial conjecture, the existence
 of transfer factors was part of the conjecture.  Langlands and
 Shelstad later defined the transfer factors explicitly
 \cite{LS:1987}.  We also mention some extensions of the FL.

\subsection{descent to the Lie algebra}\label{sec:descent}

A lemma of Harish-Chandra's asserts the transfer of an orbital
integral on $G$ near a singular semisimple element $z\gamma_0$, with
$z$ central, to an orbital integral on the centralizer $I_{\gamma_0}$.
This is called the {\it descent} of orbital integrals.  Langlands and
Shelstad made hard calculations in Galois cohomology to prove that
their transfer factors are compatible with Harish-Chandra's descent of
orbital integrals~\cite{LS:1990}.  The point of their calculations was
to reduce identities of orbital integrals involving transfer factors
to a neighborhood of $\gamma_0=1$, arguing by induction on the
dimension of the centralizer.  In a neighborhood of $\gamma_0=1$, identities can
be pushed to the Lie algebra, using the exponential map.

The original FL has been supplemented by a twisted FL, conjectured by
Kottwitz and Shelstad, where the data is twisted by a nontrivial outer
automorphism $\theta$ of the group $G$~\cite{KS:1999}.  In the
untwisted case, the centralizer of an element fails to give a group of
smaller dimension precisely when the element is central.  By contrast,
a twisted centralizer (with respect to a nontrivial outer
automorphism) always has dimension less than $G$.  As a consequence,
descent {\it always untwists} the twisted FL into identities of
ordinary orbital integrals.  If the (standard) FL is then applied,
each $\kappa$-orbital integral can be replaced with a stable orbital
integral.  By combining both descent and stabilization, the twisted FL
of Kottwitz and Shelstad takes the form of identities of stable
orbital integrals on the Lie algebra (from which the automorphism and
the character $\kappa$ have entirely vanished).  The corresponding
long calculations in Galois cohomology that establish descent
properties of the transfer factors for the twisted FL have been
carried out by Waldspurger \cite{Wald:2008}.  Ng\^o proves the general
twisted FL in its untwisted stable form on the Lie algebra.

\subsection{Hecke algebras}

 A global argument based on
the trace formula shows that the FL holds for the full
Hecke algebra for an arbitrary nonarchimedean local field of
characteristic zero, provided it holds for the unit element of the
Hecke algebra for local fields of sufficiently large residual
characteristic (and for groups of smaller dimension)~\cite{FLSE}.  
The idea of the proof is to choose suitable global
functions for which the comparison of stable trace formulas yields an
obstruction to the FL.  This obstruction, which comes from the
spectral side of the trace formula, takes the form
of a set of linear functionals  
\[
L:{\mathcal H}\to\ring{C},\quad L(f) = \sum_\pi a(\pi) \op{trace}\,\pi(f),
\]
on the local spherical Hecke
algebra ${\mathcal H}$ of the reductive group $G$, each given by a finite sum over
irreducible admissible representations with an Iwahori fixed vector.
By purely local arguments, it
can be shown that no nonzero linear map $L$ exists of the form
prescribed by the global theory.  
Because the obstructions $L$ are zero,
the FL can be shown to hold on the full spherical Hecke algebra.

\subsection{smooth transfer}

Langlands's book on the stabilization of the trace formula contains
two separate conjectures:  the transfer of smooth
functions  and the FL~\cite{Langlands:debuts}.  An important result
of Waldspurger links the two conjectures, by proving that the
FL implies the transfer of smooth functions.  
His key local lemma shows how to obtain simultaneous control over the
orbital integrals of test functions $f$ on the Lie algebra $\g$ and the orbital integrals of
their Fourier transform $\hat f$~\cite[Prop.~8.2]{Wald:transfert}.  
In view of the uncertainty principle, it is a remarkable feat to
control both $f$ and $\hat f$ as he does.
His proof is a global argument, based on a stable Poisson summation
trace formula on the Lie algebra over the ring of adeles.  The key local lemma allows
Waldspurger to pick global test functions for which the comparison of
trace formulas asserts a local identity: the Fourier transform of a
semisimple $\kappa$-orbit on $G$ equals the Fourier transform of the
corresponding stable orbit on $H$.  By a purely local argument, this
stabilization identity of Fourier transforms  implies smooth transfer.

\subsection{weighted orbital integrals}

Langlands's book is a {\it d\'ebut}: he
stabilizes the terms in the trace formula that come from regular
elliptic conjugacy classes, but  this is insufficient for general
applications of the trace formula.  Kottwitz extended the analysis to
singular elliptic conjugacy classes~\cite{Kott:singular}.  Arthur has
completed the full stabilization without restrictions on the conjugacy
classes.  The non-elliptic conjugacy classes lead to significant
complications.  Arthur truncates the trace formula to obtain the
convergence of the non-elliptic terms.  Because of truncation, the
non-elliptic terms bear ``weights,'' non-invariant factors that
appears in the integrand of orbital integrals.  Arthur conjectured a
weighted FL needed for stabilization of the non-elliptic
terms~\cite{Arthur:2002}.  Chaudouard and Laumon have used the Hitchin
fibration to prove Arthur's weighted
FL~\cite{CL:2009:I},\cite{CL:2009:II}.

\subsection{transfer to characteristic zero}

The FL for nonarchimedean local fields in characteristic zero can be
deduced from the FL in positive
characteristic~\cite{Wald:2006},~\cite{CHL:2010}.  Cluckers and Loeser
have developed a general abstract theory of integration as a
combination of primitive operations such as taking the volume of a
ball of given radius, enumerating points on a variety over the residue
field, summing an infinite $q$-series, and making a change of
variables.  Since each of the primitive operations manifestly depends
only on the residue field rather than the field itself, their theory
allows many identities of integrals to be transfered from one field to
another with the same residue field.  The FL lemma and its weighted
and twisted variants are identities that fall within the scope of this
theory.  Waldspurger's approach is also an abstraction
of $p$-adic integration, but requires more detailed
properties of the specific integrals appearing in the FL.

\subsection{etc.}

These separate variations on the FL can be considered in concert: a
weighted twisted FL, the twisted FL on the full Hecke algebra,
transfer of the weighted FL to characteristic zero, and so forth.
Most combinations have now been proved.

\section{literature}

I recommend Ben-Zvi's video lecture for a presentation of the big
ideas of Ng\^o's proof.  Drinfeld's lecture notes contain many worked
examples and exercises that are helpful in learning the geometric
concepts.  I also recommend Nadler's survey~\cite{Nadler:2010},
Casselman for an in-depth treatment of $SL_2$ with history going back
to Hecke~\cite{Cass:2010}, my summer school lecture for the detailed
statement of the FL~\cite{Hales:FL-statement}, Arthur's Fields medal
laudation~\cite{Arthur:2010}, and \cite{CHLaumon:2010}.  Several
articles in the book project deal with the
FL~\cite{Harris:book-project}, particularly~\cite{DN:2010}.

Ng\^o's book is superb, both as mathematics and as
exposition~\cite{NBC:2010}.  It is helpful to read it with his earlier
paper~\cite{NBC:2006}.  He has several supplementary accounts,
especially the expository account~\cite{NBC:report:2010}, his article in the book
project~\cite{NBC:abelian}, and ICM lectures.

While there have been numerous applications that quote the FL as a
finished product, Yun, Chaudouard, and Laumon are noteworthy in
following Ng\^o in their direct use of the Hitchin fibration to prove
new results in the field~\cite{Yun:2009}.

I wish to thank Bhargava, Harris, Ng\^o, and Skinner for comments that
helped me to prepare this and my earlier report~\cite{thales:NBC:2011}.

\raggedright
\bibliographystyle{plain} %plainnat
%\bibliography{all}

\end{document}